\definecolor{darkred}{RGB}{139,0,0}
\definecolor{darkblue}{RGB}{0,0,139}
\definecolor{darkgreen}{RGB}{0,70,0}
\newtheorem{bigthm}{Theorem}
\newtheorem{bigcor}[bigthm]{Corollary}
\newtheorem{thm}{Theorem}[section]
\newtheorem{lem}[thm]{Lemma}
\newtheorem{prop}[thm]{Proposition}
\newtheorem{cor}[thm]{Corollary}
\theoremstyle{definition}
\theoremstyle{remark}
\newtheorem{rem}[thm]{Remark}
\newtheorem*{nrem}{Remark}
\newcommand\dslash{/\mkern-6mu/}
\newcommand{\Diff}{\ensuremath{\operatorname{Diff}^{\scaleobj{0.8}{+}}}}
\newcommand{\Diffuo}{\ensuremath{\operatorname{Diff}}}
\newcommand{\BDiff}{\ensuremath{\operatorname{BDiff}^{\scaleobj{0.8}{+}}}}
\newcommand{\BDiffuo}{\ensuremath{\operatorname{BDiff}}}
\newcommand{\Fr}{\ensuremath{\operatorname{Fr}^{\scaleobj{0.8}{+}}}}
\newcommand{\oH}{\ensuremath{\operatorname{H}}}
\newcommand{\bfZ}{\ensuremath{\mathbf{Z}}}
\newcommand{\bfS}{\ensuremath{\mathbf{S}}}
\newcommand{\M}{\ensuremath{\mathbf{M}}}
\newcommand{\MSO}{\ensuremath{\mathbf{MSO}}}
\newcommand{\MO}{\ensuremath{\mathbf{MO}}}
\newcommand{\Mtheta}{\ensuremath{\mathbf{M\theta}}}
\newcommand{\MT}{\ensuremath{\mathbf{MT}}}
\newcommand{\BO}{\ensuremath{\operatorname{BO}}}
\newcommand{\BSO}{\ensuremath{\operatorname{BSO}}}
\newcommand{\SO}{\ensuremath{\operatorname{SO}}}
\newcommand{\EO}{\ensuremath{\operatorname{EO}}}
\newcommand{\Thom}{\ensuremath{\mathbf{Th}}}
\newcommand{\hAut}{\ensuremath{\operatorname{hAut}}}
\newcommand{\Stab}{\ensuremath{\operatorname{Stab}}}
\newcommand{\MTO}{\ensuremath{\mathbf{MTO}}}
\newcommand{\MTString}{\ensuremath{\mathbf{MTString}}}
\newcommand{\aut}{\ensuremath{\operatorname{Aut}}}
\newcommand{\im}{\ensuremath{\operatorname{im}}}
\newcommand{\MSpin}{\ensuremath{\mathbf{MSpin}}}
\newcommand{\MString}{\ensuremath{\mathbf{MString}}}
\newcommand{\tmf}{\ensuremath{\mathbf{tmf}}}
\newcommand{\Coker}{\ensuremath{\operatorname{coker}}}
\newcommand{\fr}{\ensuremath{\operatorname{fr}}}
\newcommand{\Mod}[1]{\ (\mathrm{mod}\ #1)}
\newcommand{\interior}[1]{\ensuremath{\operatorname{int}(#1)}}
\newcommand{\Kerv}{\ensuremath{\operatorname{Kerv}}}
\newcommand{\bP}{\ensuremath{\operatorname{bP}}}
\newcommand{\num}{\ensuremath{\operatorname{num}}}
\newcommand{\der}{\ensuremath{der}}
\newcommand{\ra}{\rightarrow}
\newcommand{\lra}{\longrightarrow}
\newcommand{\lla}{\longleftarrow}
\newcommand{\xlra}[1]{\overset{#1}{\longrightarrow}}
\newcommand{\mapnoname}[4]{\ensuremath{\begin{array}{rcl} 
      #1 & \longrightarrow & #2 \\[0.3em] 
      #3 & \longmapsto & #4
    \end{array}}}
\begin{document}

\title{On characteristic classes of exotic manifold bundles}
\author{Manuel Krannich}
\email{krannich@dpmms.cam.ac.uk}
\address{Centre for Mathematical Sciences, Wilberforce Road, Cambridge CB3 0WB, UK}
\begin{abstract}
Given a closed simply connected manifold $M$ of dimension $2n\ge6$, we compare the ring of characteristic classes of smooth oriented bundles with fibre $M$ to the analogous ring resulting from replacing $M$ by the connected sum $M\sharp\Sigma$ with an exotic sphere $\Sigma$. We show that, after inverting the order of $\Sigma$ in the group of homotopy spheres, the two rings in question are isomorphic in a range of degrees. Furthermore, we construct infinite families of examples witnessing that inverting the order of $\Sigma$ is necessary.
\end{abstract}

\maketitle

The classifying space $\BDiff(M)$ of the topological group of orientation-preserving diffeomorphisms of a closed oriented manifold $M$ in the smooth Whitney-topology classifies smooth oriented fibre bundles with fibre $M$. Its cohomology $\oH^*(\BDiff(M))$ is the ring of characteristic classes of such bundles and is thus of great interest from the point of view of geometric topology. Initiated by Madsen--Weiss' solution of the Mumford conjecture on the moduli space of Riemann surfaces \cite{MadsenWeiss}, there has been significant progress in the study of $\oH^*(\BDiff(M))$ in recent years, including in high dimensions. A programme of Galatius--Randal-Williams \cite{GRWstable,GRWII,GRWI} culminated in an identification of the cohomology in consideration in a range of degrees in purely homotopy theoretical terms for all simply connected manifolds $M$ of dimension $2n\ge6$. Analogous to the case of surfaces, this range depends on the \emph{genus} of $M$, defined as \[g(M)\coloneqq\max\{g\ge0\mid \text{there exists a manifold }N\text{ with }M\cong N\sharp(S^n\times S^n)^{\sharp g}\}.\]
This work is concerned with the behaviour of the cohomology $\oH^*(\BDiff(M))$ when changing the smooth structure of the underlying $d$-manifold $M$ on an embedded disc of codimension zero; that is, when replacing $M$ by the connected sum $M\sharp\Sigma$ with an exotic sphere $\Sigma\in\Theta_d$. Here $\Theta_d$ denotes the finite abelian group of oriented homotopy spheres, classically studied by Kervaire--Milnor \cite{KervaireMilnor}. In the first part, we use the work of Galatius--Randal-Williams to show that for closed simply connected manifolds $M$ of dimension $2n\ge 6$, the cohomology $\oH^*(\BDiff(M))$ is insensitive to replacing $M$ by $M\sharp\Sigma$ in a range of degrees, at least after inverting the order of $\Sigma$. As our methods are more of homological than of cohomological nature, we state our results in homology.

\begin{bigthm}\label{theorem:maintheorem}Let $M$ be a closed, oriented, simply connected manifold of dimension $2n\ge 6$ and $\Sigma\in\Theta_{2n}$ an exotic sphere. There is a zig-zag of maps of spaces inducing an isomorphism
\[\textstyle{\oH_*(\BDiff(M);\bfZ[\frac{1}{k}])\cong\oH_*(\BDiff(M\sharp\Sigma);\bfZ[\frac{1}{k}])}\] in degrees $*\le\frac{g(M)-3}{2}$, where $k$ denotes the order of $\Sigma$ in $\Theta_{2n}$.
\end{bigthm}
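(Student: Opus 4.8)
The engine of the argument is the theorem of Galatius and Randal-Williams: for a closed simply connected $W$ of dimension $2n\ge 6$ with tangential $n$-type $\theta\colon B\to\BO(2n)$, the parametrised surgery map identifies $\oH_*(\BDiffuo^\theta(W);A)$ with the homology of a path component of $\Omega^\infty\MT\theta$, with arbitrary coefficients $A$, in degrees $*\le\tfrac{g(W)-3}{2}$. My first move would be to observe that $M$ and $M\sharp\Sigma$ feed the same input into this machine. Since $\Sigma$ is homeomorphic to the standard sphere, $M$ and $M\sharp\Sigma$ agree on the complement of an embedded disc $D^{2n}$; in particular their tangent bundles restrict to the \emph{same} map on $M\setminus\interior{D^{2n}}$, a codimension-zero submanifold whose inclusion into $M$ is $(2n-1)$-connected, hence an $(n+1)$-equivalence. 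As the Moore--Postnikov $n$-stage of a map depends only on such a skeleton, $M$ and $M\sharp\Sigma$ have the same tangential $n$-type $\theta$ and therefore the same Thom spectrum $\MT\theta$. Moreover $g(M)=g(M\sharp\Sigma)$: a splitting $M\cong N\sharp(S^n\times S^n)^{\sharp g}$ yields $M\sharp\Sigma\cong(N\sharp\Sigma)\sharp(S^n\times S^n)^{\sharp g}$, and summing with the inverse of $\Sigma$ in $\Theta_{2n}$ gives the reverse inequality, so the two stability ranges coincide.

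With this in hand, the Galatius--Randal-Williams theorem produces isomorphisms $\oH_*(\BDiffuo^\theta(M);A)\cong\oH_*(\Omega^\infty_{[M]}\MT\theta;A)$ and $\oH_*(\BDiffuo^\theta(M\sharp\Sigma);A)\cong\oH_*(\Omega^\infty_{[M\sharp\Sigma]}\MT\theta;A)$ in degrees $*\le\tfrac{g(M)-3}{2}$, onto the path components recording the respective $\theta$-cobordism classes. Because $\Sigma$ bounds a parallelisable manifold it is $\theta$-nullbordant, so $[M]=[M\sharp\Sigma]$ in $\pi_0\Omega^\infty\MT\theta$; in any case translation makes all path components of an infinite loop space homotopy equivalent. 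Hence $\oH_*(\BDiffuo^\theta(M))\cong\oH_*(\BDiffuo^\theta(M\sharp\Sigma))$ in the stated range, \emph{integrally}. Consequently the whole point of the theorem lies in passing from the $\theta$-decorated moduli spaces to the undecorated ones.

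For that passage, $\BDiffuo^\theta(M)\to\BDiff(M)$ is a fibration with fibre the space $\mathrm{Bun}^\theta(\tau_M)$ of $\theta$-structures on $TM$, and likewise for $M\sharp\Sigma$. The homotopy fibre of $\theta$ has vanishing homotopy groups in degrees $\ge n$, so extending a $\theta$-structure from $M\setminus\interior{D^{2n}}$ across the top cell is unobstructed and unique up to homotopy; thus $\mathrm{Bun}^\theta(\tau_M)$ depends only on $TM$ away from the disc and is homotopy equivalent, \emph{as a space}, to $\mathrm{Bun}^\theta(\tau_{M\sharp\Sigma})$. What genuinely differs between the two fibration sequences is only the diffeomorphism group acting on this common fibre. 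To compare $\Diff(M)$ with $\Diff(M\sharp\Sigma)$ I would pass to the once-punctured manifolds: since $\Sigma$ minus two discs is an $h$-cobordism from $S^{2n-1}$ to itself, $(M\sharp\Sigma)\setminus\interior{D^{2n}}$ is diffeomorphic to $M_1:=M\setminus\interior{D^{2n}}$, so $\BDiff_\partial(M_1)$ is literally the \emph{same} space for both, and $\BDiff(M)$ and $\BDiff(M\sharp\Sigma)$ are recovered from it by capping the boundary $S^{2n-1}$ with a disc --- via the fibration $\Fr(TM)\to\BDiff_\partial(M_1)\to\BDiff(M)$ and its analogue --- the two cappings differing precisely by the class of $\Sigma$ in $\pi_0\Diff(S^{2n-1})\cong\Theta_{2n}$, an element of order $k$.

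It remains to promote this into a zig-zag of \emph{spaces} inducing a $\bfZ[\tfrac1k]$-homology isomorphism in the stated range. I would do so by interposing a moduli space $Z$ of smooth oriented $M$-bundles equipped with a prepared fibrewise connected-sum region, mapping to $\BDiff(M)$ by forgetting the region and to $\BDiff(M\sharp\Sigma)$ by performing the fibrewise sum with $\Sigma$; the homotopy fibres of these two maps are assembled from frame bundles of the vertical tangent bundles together with a torsor over the $\Theta_{2n}$-worth of gluing choices, and their discrepancy is governed entirely by the class of $\Sigma$ and hence becomes $\bfZ[\tfrac1k]$-homologically invisible because $\Sigma$ has order $k$. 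Feeding this through the isomorphism of the second step --- applied also to the punctured manifolds, whose genus is again $g(M)$ and whose tangential $n$-type is unchanged --- yields the required zig-zag. The main obstacle is exactly this last point: isolating the difference between the two diffeomorphism groups as a single $k$-torsion class in $\Theta_{2n}$, checking that inverting $k$ kills its effect on homology in the right range, and --- most delicately --- realising the comparison by an honest zig-zag of maps rather than merely an abstract isomorphism. Everything else is formal, given the Galatius--Randal-Williams machinery and the elementary fact that $M$ and $M\sharp\Sigma$ coincide away from a disc.
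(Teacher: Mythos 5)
Your opening steps are essentially the same as the paper's: $M$ and $M\sharp\Sigma$ share a tangential $n$-type because they agree away from a disc and Moore--Postnikov factorisations depend only on an $(n+1)$-skeleton, and $g(M)=g(M\sharp\Sigma)$ by the summing-with-$\Sigma^{-1}$ argument. After that the proposal diverges and runs into trouble.

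First, the assertion that ``because $\Sigma$ bounds a parallelisable manifold it is $\theta$-nullbordant, so $[M]=[M\sharp\Sigma]$'' is false, and is in fact precisely what the second half of the paper is devoted to disproving. Not every homotopy sphere bounds a parallelisable manifold (only those in $\bP_{2n+1}\subset\Theta_{2n}$ do), and for $\theta$ the tangential $n$-type of $W_g$ the class $[\Sigma,\ell_\Sigma]\in\Omega_{2n}^{\theta^\perp}\cong\pi_{2n}\MO\langle n\rangle$ is often nonzero: the paper's Proposition~\ref{proposition:nontrivialproducts} exhibits infinitely many $\Sigma$ for which even $\eta\cdot[\Sigma,\ell_\Sigma]\in\pi_{2n+1}\MO\langle n\rangle$ survives. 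You do give the correct alternative justification (translation makes all components of the infinite loop space $\Omega^\infty\MT\theta_{2n}$ homotopy equivalent), so the integral statement about the $\theta$-\emph{decorated} moduli spaces $\BDiffuo^\theta$ stands. But the flag is important: the nonvanishing of $[\Sigma,\ell_\Sigma]$ is the whole engine behind Theorems~\ref{theorem:H1}--\ref{theorem:H3}.

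Second, and more seriously, your proof never actually crosses the gap from $\BDiffuo^\theta$ to $\BDiff$. You correctly identify this as ``the main obstacle'' and sketch a moduli space $Z$ of bundles with a prepared connected-sum region, but nothing is verified: the fibres of the two forgetful maps, the claim that their ``discrepancy'' is a single $\Theta_{2n}$-class, and above all that inverting $k$ yields a homology isomorphism \emph{in the range} --- none of this is established. The paper sidesteps the $\BDiffuo^\theta$-to-$\BDiff$ descent entirely by invoking the $\hAut(\theta_{2n})$-quotient form of Galatius--Randal-Williams (Theorem~\ref{theorem:GRW}): the Pontryagin--Thom map lands directly in $(\Omega^\infty\MT\theta_{2n})\dslash\hAut(\theta_{2n})$. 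The residual difficulty --- that the homotopy quotient is \emph{not} an infinite loop space, so its path components need not be equivalent --- is then resolved algebraically. Lemma~\ref{lemma:injectivity} identifies $\pi_0\MT\theta_{2n}$ inside $\bfZ\oplus\Omega_{2n}^{\theta^\perp}$, one checks $[M\sharp\Sigma,\ell_{M\sharp\Sigma}]=[M,\ell_M]+[\Sigma,\ell_\Sigma]$ with $[\Sigma,\ell_\Sigma]$ of order dividing $k$, and then multiplication by $k$ on $\Omega^\infty\MT\theta_{2n}$ is an $\hAut(\theta_{2n})$-equivariant map carrying both relevant components to a common one, providing the zig-zag and (after a Serre spectral sequence comparison) the $\bfZ[\frac{1}{k}]$-homology isomorphism. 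Your proposal contains the raw ingredients (same $\theta$, same genus, $k$-torsion class measuring the difference) but not the mechanism --- the equivariant multiplication-by-$k$ zig-zag --- that actually closes the argument.
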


\begin{nrem}Using recent work of Friedrich \cite{Friedrich}, one can enhance Theorem~\ref{theorem:maintheorem} to all oriented, closed, connected manifolds $M$ of dimension $2n\ge6$ whose associated group ring $\bfZ[\pi_1M]$ has finite unitary stable rank. This applies for instance if the fundamental group is virtually polycyclic, so in particular if it is finite or finitely generated abelian.
\end{nrem}

In the second part of this work, we focus on the family of manifolds $W_g=\sharp^g(S^n\times S^n)$ to address the question of whether Theorem~\ref{theorem:maintheorem} fails without inverting the order of the homotopy sphere, i.e.\,with integral coefficients. To state our first result in that direction, we denote by $\MO\langle n\rangle$ the Thom-spectrum of the $n$-connected cover $\BO\langle n\rangle\ra\BO$. By the classical theorem of Pontryagin--Thom, the ring of homotopy groups $\pi_*\MO\langle n\rangle$ is isomorphic to the ring $\Omega_*^{\langle n\rangle}$ of bordism classes of closed manifolds equipped with a lift of their stable normal bundle along $\BO\langle n\rangle\ra\BO$. An oriented homotopy sphere $\Sigma$ of dimension $2n$ has, up to homotopy, a unique such lift that is compatible with its orientation and hence defines a canonical class $[\Sigma]\in\Omega^{\langle n\rangle}_{2n}$. As is common, we denote by $\eta\in\pi_1\bfS$ the generator of the first stable homotopy group of the sphere spectrum.

\begin{bigthm}\label{theorem:H1}For $g\ge0$ and $\Sigma\in\Theta_{2n}$ with $2n\ge6$, there is an exact sequence \[\bfZ/2\lra\oH_1(\BDiff(W_g);\bfZ)\lra\oH_1(\BDiff(W_g\sharp\Sigma);\bfZ)\lra 0.\] If the product $\eta\cdot[\Sigma]\in\pi_{2n+1}\MO\langle n\rangle$ does not vanish, then the first map is nontrivial. Moreover, the converse holds for $g\ge2$.
\end{bigthm}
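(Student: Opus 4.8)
The plan is to factor the comparison through the moduli space $\BDiffuo_\partial(W_{g,1})$ of $W_{g,1}:=W_g\setminus\interior{D^{2n}}$. Writing $W_g\sharp\Sigma=W_{g,1}\cup_\phi D^{2n}$ for a diffeomorphism $\phi$ of $S^{2n-1}$ representing $\Sigma$ under $\pi_0\Diff(S^{2n-1})\cong\Theta_{2n}$, and removing from $W_g\sharp\Sigma$ an open disc contained in the glued-in $D^{2n}$, identifies $\BDiffuo_\partial\bigl((W_g\sharp\Sigma)\setminus\interior{D^{2n}}\bigr)$ with $\BDiffuo_\partial(W_{g,1})$ --- the conjugating diffeomorphism being unique up to an inner automorphism, hence canonical on homology. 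Filling the removed disc back in yields maps $\BDiffuo_\partial(W_{g,1})\to\BDiff(W_g)$ and $\BDiffuo_\partial(W_{g,1})\to\BDiff(W_g\sharp\Sigma)$ whose homotopy fibres are the oriented frame bundles $\Fr(W_g)$ and $\Fr(W_g\sharp\Sigma)$. As $W_g$ and $W_g\sharp\Sigma$ are $(n-1)$-connected and spin with $2n\ge6$, both frame bundles have $\oH_0=\bfZ$, $\oH_1=\bfZ/2$ and $\oH_2=0$, the $\oH_1$ being canonically the image of $\pi_1\SO(2n)$; the two Serre spectral sequences then give $\oH_1(\BDiff(W_g))\cong\oH_1(\BDiffuo_\partial(W_{g,1}))/\langle\sigma_0\rangle$ and $\oH_1(\BDiff(W_g\sharp\Sigma))\cong\oH_1(\BDiffuo_\partial(W_{g,1}))/\langle\sigma_\Sigma\rangle$, where $\sigma_0,\sigma_\Sigma$ are the images of the generator of $\pi_1\SO(2n)=\bfZ/2$ under the respective fibre inclusions. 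One checks $\sigma_0=0$: for $g=0$ because $\pi_0\Diffuo_\partial(D^{2n})\to\pi_0\Diff(S^{2n})$ is an isomorphism, and for $g\ge1$ because the boundary Dehn twist of $W_{g,1}$ lies in the commutator subgroup of $\pi_0\Diffuo_\partial(W_{g,1})$. The exact sequence of the theorem then follows formally, the first map sending the generator to the class of $\sigma_\Sigma$ and the second being the evident quotient.

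\textbf{Identifying $\sigma_\Sigma$.} Transporting the fibre inclusion for the $\Sigma$-gluing back along the identification above, one sees that $\sigma_\Sigma$ is the class in $\oH_1(\BDiffuo_\partial(W_{g,1}))$ of the ``boundary Dehn twist'' of $W_{g,1}$ along the loop $s\mapsto\phi\rho(s)\phi^{-1}$ in $\Diff(S^{2n-1})$, where $\rho$ is a loop in $\SO(2n)$ generating $\pi_1\SO(2n)$. Thus $\sigma_\Sigma$ (equivalently $\sigma_\Sigma-\sigma_0$, since $\sigma_0=0$) is the image of $[\phi\rho\phi^{-1}]-[\rho]\in\pi_1\Diff(S^{2n-1})$ --- the deviation from triviality of the $\pi_0$-action of $\Theta_{2n}$ on $[\rho]$ --- under the homomorphism $\pi_1\Diff(S^{2n-1})\to\pi_0\Diffuo_\partial(W_{g,1})\to\oH_1(\BDiffuo_\partial(W_{g,1}))$ that spreads a loop over a boundary collar.

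\textbf{The key computation.} Sending a boundary-fixing diffeomorphism of $W_{g,1}$ to the bordism class of its mapping torus --- capped off with $D^{2n}\times S^1$ and equipped with the $\langle n\rangle$-structure induced from a fixed one on $W_{g,1}$, which is available since $W_{g,1}$ is $(n-1)$-connected and is preserved up to homotopy --- defines a homomorphism $\oH_1(\BDiffuo_\partial(W_{g,1}))\to\Omega^{\langle n\rangle}_{2n+1}=\pi_{2n+1}\MO\langle n\rangle$. I would show that this homomorphism carries $\sigma_\Sigma$ to $\pm\,\eta\cdot[\Sigma]$: the generator of $\pi_1\SO(2n)$ maps under $\pi_1\SO(2n)\to\pi_1\SO\xrightarrow{\ \sim\ }\pi_1\bfS=\langle\eta\rangle$ to $\eta$, and the capped mapping torus of the $\phi$-twisted boundary twist is, with its $\langle n\rangle$-structure, the connected-sum-with-$\Sigma$ construction performed around the $\eta$-framed circle --- i.e.\ it is $S^1\times\Sigma$ with the structure multiplying the nonbounding $\langle n\rangle$-structure on $S^1$ with the canonical one on $\Sigma$, which represents $\eta\cdot[\Sigma]$. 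This is the main obstacle: tracking the $\langle n\rangle$-structure through the mapping torus, the collar twist and the capping-off is essentially a relative $\langle n\rangle$-bordism computation for the pair $(W_{g,1}\times S^1,\,S^{2n-1}\times S^1)$, and obtaining the factor $\eta$ (rather than, say, $\eta^2$ or $0$) requires care. Granting it, $\eta\cdot[\Sigma]\neq0$ forces $\sigma_\Sigma\neq0$, so the first map is nontrivial --- the forward implication, valid for all $g\ge0$.

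\textbf{The converse for $g\ge2$.} Here one needs the reverse implication $\sigma_\Sigma\neq0\Rightarrow\eta\cdot[\Sigma]\neq0$, i.e.\ that the mapping-torus homomorphism is injective on $\sigma_\Sigma$. I would use that, although the general homological-stability range does not reach $g=2$, the group $\oH_1(\BDiffuo_\partial(W_{g,1});\bfZ)$ is already independent of $g$ for $g\ge2$, and hence --- by the theorem of Galatius--Randal-Williams --- is computed by $\pi_1\MT\theta_n$ for the $n$-connected tangential structure $\theta_n$; one then identifies the mapping-torus map on this stable value with the standard comparison of $\pi_1\MT\theta_n$ with $\pi_{2n+1}\MO\langle n\rangle$ and checks that $\sigma_\Sigma$ is detected faithfully. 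The two delicate inputs are therefore the stability of first homology down to $g=2$ (to be extracted from the structure of $\pi_0\Diffuo_\partial(W_{g,1})$, or from known computations of $\oH_1$ of these moduli spaces) together with the Pontryagin--Thom identification of the previous paragraph, which is the crux of both directions.
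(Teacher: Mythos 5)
Your overall strategy matches the paper's: pass through a common intermediate space $\BDiffuo(\,\cdot\,,D^{2n})\simeq\BDiffuo_\partial(W_{g,1})$ (unchanged under $\sharp\Sigma$ by what is Lemma~\ref{lemma:relativediffisomorphic} in the paper), identify the connecting class $\sigma_\Sigma$ with a variant of a boundary (collar) twist, and detect it in $\Omega^{\langle n\rangle}_{2n+1}$ via the capped mapping-torus construction --- this is exactly the morphism $\Phi_1$ of the paper, and your ``key computation'' is precisely Lemma~\ref{lemma:diagram} (with $[W_g\sharp\Sigma,\ell]=[\Sigma,\ell_\Sigma]$ because $W_g$ bounds a parallelisable handlebody). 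The minor cosmetic difference is that you work directly with the fibre sequence $\Fr(M)\to\BDiffuo(M,D^{2n})\to\BDiff(M)$, whereas the paper factors through $\BDiff(M,*)$ and $\BSO(2n)$; the content of the two five-term sequences is the same.

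Two steps in your write-up are genuine gaps. \textbf{(1) Triviality of $\sigma_0$.} For $g\ge 1$ you assert that the boundary Dehn twist of $W_{g,1}$ ``lies in the commutator subgroup'' without argument; the paper proves the stronger fact that the collar twist of $W_g$ is trivial on $\pi_i$ for $i\le n-1$ by exhibiting a smooth $\SO(n)$-action on $W_g$ with the right tangential representation at a fixed point (Lemma~\ref{lemma:action}, Corollary~\ref{corollary:collartwistingtrivial}, via the equivariant connected sum description pointed out by Reinhold). This input is needed and cannot be waved away. \textbf{(2) The converse for $g\ge 2$.} Your plan --- stability of $\oH_1(\BDiffuo_\partial(W_{g,1}))$ down to $g=2$ combined with the Galatius--Randal-Williams identification --- has a hole: the GRW range does not reach $g=2$ in degree $1$, so the stability you invoke is itself a nontrivial theorem, and moreover the desuspended stabilisation map $\pi_1\MT\theta_{2n}\to\pi_{2n+1}\MO\langle n\rangle$ is only guaranteed to be surjective, not injective, so even knowing the stable value of $\oH_1$ one must still show $\sigma_\Sigma$ is faithfully detected. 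The paper instead quotes the precise structural result from \cite{Krannich} that $\pi_1\BDiffuo(W_g\sharp\Sigma,D^{2n})\to\aut(Q_{W_g\sharp\Sigma})\times\Omega^{\langle n\rangle}_{2n+1}$ is an isomorphism on abelianisations for $g\ge 2$; since $\sigma_\Sigma$ acts trivially on $\oH_n$, the $\Omega^{\langle n\rangle}_{2n+1}$-factor is the \emph{only} possible obstruction, which is exactly the converse. Your proposal needs an equivalent of this result to close the argument, not just stability plus GRW.
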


Note that $\oH_1(\BDiff(W_g);\bfZ)$ agrees with the abelianisation of the group $\pi_0\Diff(W_g)$ of isotopy classes of diffeomorphisms of $M$. It follows from a result of Kreck \cite[Thm\,2]{Kreck} that this group is finitely generated for $2n\ge6$, so the previous theorem implies that $\oH_1(\BDiff(W_g);\bfZ)$ and $\oH_1(\BDiff(W_g\sharp\Sigma);\bf{Z})$ cannot be isomorphic if the product $\eta\cdot[\Sigma]\in\pi_{2n+1}\MO\langle n\rangle$ is nontrivial. From computations in stable homotopy theory, we derive the existence of infinite families of homotopy spheres for which this product does not vanish, hence for which the integral version of Theorem~\ref{theorem:maintheorem} fails in degree $1$. Such examples exist already in dimension 8---the first possible dimension. Combining Theorem~\ref{theorem:H1} with work of Kreck \cite{Kreck}, we also find $\Sigma$ such that $\pi_0\Diff(W_g)$ and $\pi_0\Diff(W_g\sharp\Sigma)$ are nonisomorphic, but become isomorphic after abelianisation.

\begin{bigcor}\label{corollary:H1}There are $\Sigma\in\Theta_{2n}$ in infinitely many dimensions $2n$ such that  
\[\oH_1(\BDiff(W_g);\bfZ)\quad\text{and}\quad\oH_1(\BDiff(W_g\sharp\Sigma);\bfZ)\] are not isomorphic for $g\ge0$. Furthermore, there are  $\Sigma\in\Theta_{8k+2}$ for all $k\ge1$ such that 
\[\pi_1\BDiff(W_g)\quad\text{and}\quad\pi_1\BDiff(W_g\sharp\Sigma)\] are not isomorphic for $g\ge0$, but have isomorphic abelianisations for $g\ge 2$.
\end{bigcor}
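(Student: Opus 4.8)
The plan is to derive the corollary from Theorem~\ref{theorem:H1}, combined with two further ingredients: computations in stable homotopy theory producing exotic spheres $\Sigma$ with prescribed behaviour of the product $\eta\cdot[\Sigma]$, and Kreck's analysis \cite{Kreck} of the mapping class group $\pi_0\Diff(W_g)$ in terms of the underlying smooth manifold. I treat the two assertions of the corollary separately.

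For the first assertion I would begin by exhibiting, in infinitely many even dimensions $2n\ge 8$ and already in dimension $8$, an exotic sphere $\Sigma\in\Theta_{2n}$ with $\eta\cdot[\Sigma]\ne 0$ in $\pi_{2n+1}\MO\langle n\rangle$. In dimension $8$ this is checked directly on the nontrivial element of $\Theta_8$ using the low-degree homotopy of $\MO\langle 4\rangle$, and one then produces an infinite family by a periodicity argument (for instance along a $\mu$-family of elements in the stable stems). Granting such a $\Sigma$, Theorem~\ref{theorem:H1} shows that the map $\bfZ/2\to\oH_1(\BDiff(W_g);\bfZ)$ is nontrivial, hence injective as its source is $\bfZ/2$, so that
\[0\lra\bfZ/2\lra\oH_1(\BDiff(W_g);\bfZ)\lra\oH_1(\BDiff(W_g\sharp\Sigma);\bfZ)\lra 0\]
is a short exact sequence. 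Since $\Sigma$ is stably parallelisable, both $W_g$ and $W_g\sharp\Sigma$ are closed $(n-1)$-connected almost parallelisable $2n$-manifolds, so Kreck's finiteness result \cite[Thm\,2]{Kreck} applies to both and all three groups above are finitely generated abelian. For any such short exact sequence the torsion subgroups satisfy $|E_{\mathrm{tors}}|=2\cdot|A_{\mathrm{tors}}|$, because the induced map $E_{\mathrm{tors}}\to A_{\mathrm{tors}}$ is surjective with kernel $\bfZ/2$; in particular $\oH_1(\BDiff(W_g);\bfZ)\not\cong\oH_1(\BDiff(W_g\sharp\Sigma);\bfZ)$ for every $g\ge 0$.

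For the second assertion I would observe first that, by the converse part of Theorem~\ref{theorem:H1} (valid for $g\ge 2$), any $\Sigma$ with $\eta\cdot[\Sigma]=0$ has the property that the map $\bfZ/2\to\oH_1(\BDiff(W_g);\bfZ)$ vanishes, so that $\oH_1(\BDiff(W_g);\bfZ)\to\oH_1(\BDiff(W_g\sharp\Sigma);\bfZ)$ is an isomorphism; equivalently, the abelianisations of $\pi_1\BDiff(W_g)$ and $\pi_1\BDiff(W_g\sharp\Sigma)$ agree for $g\ge 2$. It therefore remains to find, for each $k\ge1$, an element $\Sigma\in\Theta_{8k+2}$ with $\eta\cdot[\Sigma]=0$ but with $\pi_0\Diff(W_g)\not\cong\pi_0\Diff(W_g\sharp\Sigma)$ for all $g\ge 0$. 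For the non-isomorphism I would use Kreck's computation \cite{Kreck}: the smooth structure of $W_g\sharp\Sigma$ affects $\pi_0\Diff(W_g\sharp\Sigma)$ only through the way a codimension-zero disc is glued back in, so that $\pi_0\Diff(W_g)$ and $\pi_0\Diff(W_g\sharp\Sigma)$ are quotients of the common group $\pi_0\Diff_\partial(W_g\setminus\interior{D^{2n}})$ by central cyclic subgroups differing by a ``boundary-spinning'' element $\sigma(\Sigma)$ built from $\Sigma\in\Theta_{2n}\cong\pi_0\Diff(S^{2n-1})$; whenever $\sigma(\Sigma)\ne0$ these quotients are non-isomorphic, for instance because their Torelli subgroups have different orders of torsion, and this one checks is detected by a stable-homotopy invariant of $\Sigma$. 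In many of the dimensions $8k+2$ one can simply take $\Sigma$ of odd order (an odd-torsion class in $\Coker(J)_{8k+2}$), for which $\eta\cdot[\Sigma]=0$ holds for degree reasons; in the remaining dimensions a more delicate stable-homotopy argument is needed to exhibit a class annihilated by $\eta$ in $\pi_{8k+3}\MO\langle 4k+1\rangle$ but not by Kreck's invariant. Combining these with the first two sentences of this paragraph, and using Kreck's identification of both groups, yields for every $k\ge1$ an exotic sphere $\Sigma\in\Theta_{8k+2}$ as required.

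The step I expect to be the main obstacle is the interface with Kreck's work in the second assertion: one must extract from \cite{Kreck} an invariant that distinguishes the abstract groups $\pi_0\Diff(W_g)$ and $\pi_0\Diff(W_g\sharp\Sigma)$ rather than merely witnessing that $W_g$ and $W_g\sharp\Sigma$ are not diffeomorphic, identify precisely when the relevant spinning element $\sigma(\Sigma)$ is nontrivial, and arrange this simultaneously with the vanishing of $\eta\cdot[\Sigma]$ --- the point being that the obstruction controlling the abelianisation via Theorem~\ref{theorem:H1} is $2$-torsion, so that passing to $\Sigma$ with vanishing $\eta\cdot[\Sigma]$ (for instance odd-order $\Sigma$) decouples the two conditions. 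By comparison, the stable homotopy inputs --- the nonvanishing of $\eta\cdot[\Sigma]$ in infinitely many dimensions in the first assertion, and the existence of the requisite classes in $\Theta_{8k+2}$ in the second --- are essentially routine once the relevant Hurewicz images into $\MO\langle n\rangle$ are understood.
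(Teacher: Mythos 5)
Your strategy for the first assertion is essentially sound and close to the paper's: produce $\Sigma$ with $\eta\cdot[\Sigma]\ne 0$ in $\pi_{2n+1}\MO\langle n\rangle$, invoke Theorem~\ref{theorem:H1}, and conclude non-isomorphism of $\oH_1$ from finite generation. Your torsion-counting argument for the resulting short exact sequence $0\to\bfZ/2\to E\to A\to0$ is correct (every lift of a torsion element of $A$ is torsion in $E$ since the kernel is torsion, so $|E_{\mathrm{tors}}|=2|A_{\mathrm{tors}}|$), and is a reasonable alternative to the paper's use of the Hopfian property of $\pi_0\Diff(W_g)$ (commensurable with an arithmetic group by Sullivan, hence residually finite and Hopfian by Malcev). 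However, your suggested source of the infinite family is wrong: the Adams $\mu$-family will not do, because $\eta\cdot\mu_{8k+2}$ lies in $\im(J)_{8k+3}$ and thus dies in $\pi_{8k+3}\MO\langle n\rangle$ (this is precisely the phenomenon the paper exploits for the \emph{second} assertion). The paper instead uses the $192$-periodic family at the prime $2$ generated by $\varepsilon\in\pi_8\bfS$, whose $\eta$-multiples are detected in $\tmf$; this is what yields $\Sigma\in\Theta_{2n}$ with $\eta\cdot[\Sigma]\ne0$ in $\MO\langle n\rangle$ for $2n\equiv 8\Mod{192}$ (Proposition~\ref{proposition:nontrivialproducts}).

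For the second assertion there are genuine gaps. First, the suggestion to take $\Sigma$ of \emph{odd} order fails for a simple reason: $\eta$ has order $2$, so if $\Sigma$ has odd order then $\eta\cdot[\Sigma]$ vanishes already in $\pi_{2n+1}\bfS$, not merely in $\pi_{2n+1}\MO\langle n\rangle$. Kreck's criterion \cite[Lem.\,4, Thm\,3\,c)]{Kreck} detects the nontriviality of the collar twist on $\pi_1$ precisely by the framed-bordism class $\eta\cdot[\Sigma]\in\pi_{2n+1}\bfS$; if that vanishes, $t_*$ is trivial and $\pi_1\BDiff(W_g)\cong\pi_1\BDiff(W_g\sharp\Sigma)$, defeating the purpose. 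What is actually needed is a $\Sigma$ for which $\eta\cdot[\Sigma]\ne 0$ in $\pi_*\bfS$ but becomes zero in $\pi_*\MO\langle n\rangle$. The paper arranges this by taking $\Sigma_\mu\in\Theta_{8k+2}\cong\Coker(J)_{8k+2}$ corresponding to Adams' element $\mu_{8k+2}$: then $\eta\cdot\mu_{8k+2}$ is a nonzero element of $\im(J)_{8k+3}$ (Adams), so the collar twist is nontrivial on $\pi_1$ by Kreck, while $\eta\cdot[\Sigma_\mu,\ell_{\Sigma_\mu}]=0$ in $\pi_{8k+3}\MO\langle 4k+1\rangle$ since $\im(J)$ dies under $\bfS\to\MO\langle n\rangle$ in degrees $\ge n+1$. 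This is exactly Proposition~\ref{proposition:trivialcollartwisting}. Second, the argument that a nontrivial collar twist forces the \emph{abstract} groups $\pi_1\BDiff(W_g)$ and $\pi_1\BDiff(W_g\sharp\Sigma)$ to be non-isomorphic is missing: you correctly identify this as the hard point, but the hand-waving about Torelli subgroups having ``different orders of torsion'' is not an argument. The paper resolves it cleanly: both groups are quotients of $\pi_1\BDiffuo(W_g,D^{2n})\cong\pi_1\BDiff(W_g)$ (using Lemma~\ref{lemma:relativediffisomorphic} and Corollary~\ref{corollary:collartwistingtrivial} for the standard sphere), so there is a surjection $\pi_1\BDiff(W_g)\twoheadrightarrow\pi_1\BDiff(W_g\sharp\Sigma)$ with nontrivial kernel; since $\pi_1\BDiff(W_g)$ is Hopfian, the two groups cannot be isomorphic.
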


\begin{nrem}
The final parts of Theorem~\ref{theorem:H1} and Corollary~\ref{corollary:H1} still hold for $g=1$ in most cases, but often fail for $g=0$ (see Remark~\ref{remark:theoremholds}).
\end{nrem}

By Theorem~\ref{theorem:H1}, the first homology groups of $\BDiff(W_g)$ and $\BDiff(W_g\sharp\Sigma)$ are isomorphic after inverting $2$. More generally, this holds with $W_g$ replaced by any simply connected manifold, which raises the question of whether the failure for Theorem~\ref{theorem:maintheorem} to hold integrally is purely $2$-primary. We answer this question in the negative by proving the following.

\begin{bigthm}\label{theorem:H3}For $g\ge0$ and $\Sigma\in\Theta_{2n}$ with $2n\ge6$, there is an isomorphism
\[\textstyle{\oH_*(\BDiff(W_g);\bfZ[\frac{1}{2}])\cong\oH_*(\BDiff(W_g\sharp\Sigma);\bfZ[\frac{1}{2}])}\] in degrees $*\le 2$ and furthermore an exact sequence
\[\textstyle{\bfZ[\frac{1}{2}]\lra\oH_3(\BDiff(W_g);\bfZ[\frac{1}{2}])\lra\oH_3(\BDiff(W_g\sharp\Sigma);\bfZ[\frac{1}{2}])}\lra0.\] If $2n=10$ and the order of $\Sigma$ is divisible by $3$, then the first morphism is nontrivial.
\end{bigthm}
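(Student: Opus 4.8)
The plan is to run the same strategy that (by assumption) yields Theorems~\ref{theorem:maintheorem} and~\ref{theorem:H1}, but keeping track of what happens at the prime $3$ and above. By the work of Galatius--Randal-Williams, in the stable range $*\le\frac{g-3}{2}$ (which for $g$ large enough covers $*\le 3$) the homology $\oH_*(\BDiff(W_g);\bfZ[\tfrac12])$ and $\oH_*(\BDiff(W_g\sharp\Sigma);\bfZ[\tfrac12])$ are computed by the homology of (a component of) the infinite loop space of the Thom spectrum $\MT\theta$ associated to the appropriate tangential structure $\theta\colon\BO\langle n\rangle\to\BO(2n)$, respectively a twisted variant encoding $\Sigma$. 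The effect of connect-summing with $\Sigma$ is, after the Galatius--Randal-Williams identification, a multiplicative self-map of this infinite loop space determined by the class $[\Sigma]\in\Omega_{2n}^{\langle n\rangle}\cong\pi_{2n}\MO\langle n\rangle$; concretely it is translation by $[\Sigma]$ in the $\pi_0$ of the relevant Thom spectrum, or equivalently multiplication by a unit once $k$ is inverted (which recovers Theorem~\ref{theorem:maintheorem}). With only $2$ inverted, $[\Sigma]$ need not be a unit, and the discrepancy between the two homologies is governed by the kernel/cokernel of multiplication by $[\Sigma]$ on the low-degree homology of $\Omega^\infty_0\MT\theta$, which in turn is controlled by the stable homotopy/homology of $\MT\theta$ through the usual Milnor exact sequence and the Hurewicz map.

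First I would set up the precise comparison map as a zig-zag of infinite loop maps, exactly as in the proof of Theorem~\ref{theorem:maintheorem}, and reduce the computation of $\oH_*(-;\bfZ[\tfrac12])$ for $*\le 3$ to a computation in the Thom spectrum $\MT\theta\simeq\MO\langle n\rangle$ (after the usual shift and identification of the tangential structure for $W_g$; note $\BO\langle n\rangle\to\BO(2n)$ is $2n$-connected so $\MT\theta$ agrees with $\MO\langle n\rangle$ in the relevant range). Next I would compute $\pi_*\MO\langle n\rangle\otimes\bfZ[\tfrac12]$ for $*=2n,2n+1,2n+2,2n+3$ in terms of the $p$-primary parts for $p\ge3$ using the Atiyah--Hirzebruch or Adams spectral sequence; the only odd-primary contribution that can appear in this range and interact with $[\Sigma]$ is the class $\alpha_1\in\pi_3\bfS$ of order $3$, so the relevant obstruction is the product $\alpha_1\cdot[\Sigma]\in\pi_{2n+3}\MO\langle n\rangle$, the odd-primary analogue of the product $\eta\cdot[\Sigma]$ appearing in Theorem~\ref{theorem:H1}. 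Then, just as Theorem~\ref{theorem:H1} extracts the $\bfZ/2$ from $\eta\cdot[\Sigma]$ in $\oH_1$, I would extract a $\bfZ[\tfrac12]$-module map $\bfZ[\tfrac12]\to\oH_3(\BDiff(W_g);\bfZ[\tfrac12])$ whose image is detected by $\alpha_1\cdot[\Sigma]$, establish the exact sequence by a comparison of the two infinite loop spaces' homology in degrees $\le 3$, and check the degrees $*\le 2$ statement by observing that no odd torsion in $\pi_{2n+1}$ or $\pi_{2n+2}$ of $\MO\langle n\rangle$ can obstruct the isomorphism there (the first odd-primary stable stem is in degree $3$).

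Finally, for the concrete case $2n=10$, $n=5$: here $\MO\langle 5\rangle=\MString$, and I would invoke the known low-degree homotopy of $\MString$ (e.g.\ via $\tmf$) to identify $\pi_{13}\MString$ and the action of $\alpha_1$; I would exhibit $\Sigma\in\Theta_{10}$ of order divisible by $3$ — $\Theta_{10}\cong\bfZ/6$ so such $\Sigma$ exist — and verify that $\alpha_1\cdot[\Sigma]\ne 0$ in $\pi_{13}\MString$, using that $[\Sigma]$ generates the $3$-torsion of $\pi_{10}\MString$ and that multiplication by $\alpha_1$ is injective on that summand. I expect the main obstacle to be the last point: pinning down that the $3$-local product $\alpha_1\cdot[\Sigma]$ is genuinely nonzero in $\pi_{13}\MString$, which requires either an explicit Adams spectral sequence computation at $p=3$ for $\MString$ in this range or a pullback to $\tmf$ where the relevant product $\alpha_1\cdot$ (the class detecting $\Sigma$) is understood; managing the interaction between the tangential structure $\BString\to\BO(10)$ and the stable $\BString\to\BO$ (so that the bordism class $[\Sigma]$ I use really is the one detected in $\oH_3$) is the technical heart of the argument.
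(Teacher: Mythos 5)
Your detection philosophy — look for an odd-primary product of the $J$-image with the bordism class of $\Sigma$, namely the degree-three product $\alpha_1\cdot[\Sigma]$ (or $\nu\cdot[\Sigma]$, same thing away from $2$) in $\pi_{2n+3}\MO\langle n\rangle$, and verify its nontriviality in $\MString$ via $\tmf$ when $2n=10$ — is exactly the paper's, so the hard computational input you identify at the end is correct. However, the overall architecture of your argument has a genuine gap: the theorem is stated for \emph{all} $g\ge 0$, but by following the zig-zag of Theorem~\ref{theorem:maintheorem} you are implicitly working in the Galatius--Randal-Williams stable range $*\le\frac{g-3}{2}$, which for $*\le 3$ forces $g\ge 9$. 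Nothing in your proposal addresses small $g$.

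The paper's actual proof is structured differently and is essentially \emph{unstable}. It uses the fibration $\BDiffuo(M,D^{2n})\to\BDiff(M,*)\to\BSO(2n)$ together with the key observation (Lemma~\ref{lemma:relativediffisomorphic}) that $\Diffuo(M,D^{2n})$ and $\Diffuo(M\sharp\Sigma,D^{2n})$ are isomorphic as topological groups — a statement valid for every $g$, with no range. The passage from $\oH_*(\BDiffuo(-,D^{2n}))$ to $\oH_*(\BDiff(-))$ is then controlled by the Serre spectral sequence over $\BSO(2n)$, whose $\bfZ[\frac12]$-homology vanishes through degree $3$, yielding the isomorphism in degrees $\le 2$ and the exact sequence in degree $3$ with $\bfZ[\frac12]\cong\oH_4(\BSO(2n);\bfZ[\frac12])$ as the source; the first map is identified with the collar twist $t_*\colon\pi_3\SO(2n)\to\pi_3\BDiffuo(W_g\sharp\Sigma,D^{2n})$ tensored with $\bfZ[\frac12]$. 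Only at the \emph{very last step}, to detect nontriviality of this map, does the paper pass through the parametrised Pontryagin--Thom map to $\Omega^\infty\Sigma^{-2n}\MO\langle n\rangle$ — and here it uses an unstabilised version of this map (\cite[Thm\,1.2]{GRWstable}) plus Lemma~\ref{lemma:diagram} relating $t_*$ to $\operatorname{im}(J)_3\cdot[\Sigma,\ell_\Sigma]$, not a stable-range comparison. The Hurewicz injectivity you flag at the end really is needed, and the paper proves it by computing the low $k$-invariants of $\Sigma^{-10}\MString$. In short: your end-of-proof computation is right, but to get all $g\ge 0$ you need to abandon the "translate by $[\Sigma]$ in $\pi_0\MT\theta$" route of Theorem~\ref{theorem:maintheorem} and instead compare $\BDiff(W_g)$ and $\BDiff(W_g\sharp\Sigma)$ through the common space $\BDiffuo(W_g,D^{2n})$ and the collar twist.
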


It follows from a recent result of Kupers \cite[Cor.\,C]{Kupers} that $\oH_3(\BDiff(W_g);\bfZ)$ is finitely generated for $2n\ge 6$, so $\oH_3(\BDiff(W_g);\bfZ[\frac{1}{2}])$ and $\oH_3(\BDiff(W_g\sharp\Sigma);\bfZ[\frac{1}{2}])$ cannot be isomorphic if the first map in the sequence of Theorem~\ref{theorem:H3} is nontrivial. As $\Theta_{10}\cong\bfZ/6$, this holds for four of the six homotopy $10$-spheres by the second part of Theorem~\ref{theorem:H3}.

\begin{bigcor}\label{corollary:H3}For $g\ge0$ and $\Sigma\in\Theta_{10}$ whose order is divisible by $3$, the groups 
\[\textstyle{\oH_3(\BDiff(W_g);\bfZ[\frac{1}{2}])\quad\text{and}\quad\oH_3(\BDiff(W_g\sharp\Sigma);\bfZ[\frac{1}{2}])}\] are not isomorphic.
\end{bigcor}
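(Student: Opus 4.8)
The plan is to deduce Corollary~\ref{corollary:H3} formally from Theorem~\ref{theorem:H3} together with the finite-generation statement of Kupers \cite[Cor.\,C]{Kupers} recorded above; all of the mathematical substance already sits in Theorem~\ref{theorem:H3}, and what remains is a short piece of module-theoretic bookkeeping. First I would fix $g\ge 0$ and $\Sigma\in\Theta_{10}$ of order divisible by $3$, and abbreviate $A\coloneqq\oH_3(\BDiff(W_g);\bfZ[\tfrac12])$ and $B\coloneqq\oH_3(\BDiff(W_g\sharp\Sigma);\bfZ[\tfrac12])$. Since $\oH_3(\BDiff(W_g);\bfZ)$ is finitely generated, $A\cong\oH_3(\BDiff(W_g);\bfZ)\otimes_{\bfZ}\bfZ[\tfrac12]$ is a finitely generated module over the Noetherian ring $\bfZ[\tfrac12]$, and by Theorem~\ref{theorem:H3} there is an exact sequence of $\bfZ[\tfrac12]$-modules $\bfZ[\tfrac12]\xlra{f}A\lra B\lra 0$ in which $f\neq 0$, because $2n=10$ and $3\mid\ord(\Sigma)$. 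In particular $B\cong A/\im(f)$ with $\im(f)\neq 0$.

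I would then observe that a finitely generated module $A$ over the Noetherian ring $\bfZ[\tfrac12]$ cannot be isomorphic to a proper quotient $A/\im(f)$ of itself: an isomorphism $\alpha\colon A/\im(f)\xrightarrow{\sim}A$ composed with the quotient map $A\twoheadrightarrow A/\im(f)$ would be a surjective endomorphism of $A$ with kernel $\im(f)\neq 0$, contradicting the standard fact that a surjective endomorphism of a Noetherian module is injective. (Alternatively one can argue with the structure theorem: $\im(f)$ is a nonzero cyclic $\bfZ[\tfrac12]$-module, hence either free of rank one---so that passing from $A$ to $B$ drops the $\bfZ[\tfrac12]$-rank---or a nonzero finite group of odd order---so that it drops the order of the torsion submodule---and in either case it strictly decreases an isomorphism invariant of $A$.) Hence $A$ and $B$ are not isomorphic as $\bfZ[\tfrac12]$-modules. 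Finally, since multiplication by $2$ is invertible on every $\bfZ[\tfrac12]$-module, the $\bfZ[\tfrac12]$-module structure on an abelian group is unique whenever it exists; therefore any abelian-group isomorphism between $A$ and $B$ would automatically be $\bfZ[\tfrac12]$-linear, and so $A$ and $B$ are not isomorphic as abelian groups either, which is what the corollary asserts.

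The one point that needs care---and where a careless argument would fail---is that $A$ and $B$ are \emph{not} finitely generated as abelian groups, only as $\bfZ[\tfrac12]$-modules, so the usual numerical invariants of finitely generated abelian groups must be replaced throughout by their $\bfZ[\tfrac12]$-module counterparts (rank over $\bfZ[\tfrac12]$, order of the torsion submodule). Apart from that there is no real obstacle: the genuine work has already been carried out in proving Theorem~\ref{theorem:H3}, of which this corollary is an immediate consequence.
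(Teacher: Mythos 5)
Your proof is correct and follows essentially the same route as the paper: the paragraph preceding the corollary deduces it from Theorem~\ref{theorem:H3} together with Kupers' finite-generation result, using implicitly that a finitely generated module over the Noetherian ring $\bfZ[\tfrac12]$ cannot be isomorphic to a proper quotient of itself. You have merely spelled out the module-theoretic bookkeeping (Noetherian-module argument, uniqueness of the $\bfZ[\tfrac12]$-module structure) that the paper leaves implicit.
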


\clearpage

\begin{nrem}
\ 
\begin{enumerate}
\item As an application of parametrised smoothing theory, Dwyer--Szczarba \cite{DwyerSzczarba} compared the homotopy types of the unit components $\Diffuo_0(M)\subset\Diff(M)$ for different smooth structures of $M$. Their work in particular implies that for a closed manifold $M$ of dimension $d\ge5$ and a homotopy sphere $\Sigma\in\Theta_d$, the spaces $\BDiffuo_0(M)$ and $\BDiffuo_0(M\sharp\Sigma)$ have the same $\bfZ[\frac{1}{k}]$-homotopy type for $k$ the order of $\Sigma\in\Theta_{d}$, which results in an analogue of Theorem~\ref{theorem:maintheorem} for $\BDiffuo_0(M)$ without assumptions on the degree. However, in general the (co)homology of $\BDiff(M)$ and $\BDiffuo_0(M)$ is very different and we do not know whether Theorem~\ref{theorem:maintheorem} holds outside the stable range in the sense of of Galatius--Randal-Williams.
\item
Our methods also show the analogues of Corollary~\ref{corollary:H3} and the first part of Corollary~\ref{corollary:H1} for homotopy instead of homology groups which implies the existence of $\Sigma\in\Theta_{2n}$ for which $\pi_i\BDiff(W_g)$ and $\pi_i\BDiff(W_g\sharp \Sigma)$ are not isomorphic for some $i$ and all $g$. For $g=0$, such examples have been constructed by Habegger--Szczarba \cite{HabeggerSzczarba}.
\end{enumerate}
\end{nrem}

\subsection*{Acknowledgements}
I would like thank Oscar Randal-Williams for asking me a question which led to this work, as well as for many enlightening discussions and his hospitality at the University of Cambridge. Furthermore, I am grateful to Søren Galatius, Alexander Kupers, and Jens Reinhold for valuable comments, and to Mauricio Bustamante for making me aware of the work of Dwyer--Szczarba and Habegger--Szczarba. I was supported by the Danish National Research Foundation through the Centre for Symmetry and Deformation (DNRF92) and by the European Research Council (ERC) under the European Union’s Horizon 2020 research and innovation programme (grant agreement No 682922).

\section{Exotic spheres and parametrised Pontryagin\textendash Thom theory}
After a brief recollection on bordism theory and groups of homotopy spheres, we recall high-dimensional parametrised Pontryagin--Thom theory à la Galatius--Randal-Williams and prove Theorem~\ref{theorem:maintheorem}.

\subsection{Bordism theory}\label{section:bordism}Let $\theta\colon B\ra \BO$ be a fibration. A \emph{tangential}, respectively \emph{normal}, $\theta$-structure of a manifold $M$ is a lift $\ell_M\colon M\ra B$ of its stable tangent, respectively normal, bundle $M\ra \BO$ along $\theta$, up to homotopy over $\BO$. The collection of bordism classes of closed $d$-manifolds equipped with a normal $\theta$-structure forms an abelian group $\Omega_d^\theta$ under disjoint union (see e.g.\,\cite[Ch.\,2]{Stong} for details). By the classical Pontryagin--Thom theorem, this group is isomorphic to the $d$th homotopy group $\pi_d\Mtheta$ of the Thom spectrum $\Mtheta$ associated to $\theta$. Normal $\theta$-structures of a manifold are in natural bijection to tangential $\theta^\perp$-structures, where $\theta^\perp\colon B^\perp\ra \BO$ is the pullback of $\theta$ along the canonical involution $-1\colon\BO\ra\BO$, so we do not distinguish between them.

\subsection{Exotic spheres}\label{section:homotopyspheres}The initial observation of Kervaire--Milnor's \cite{KervaireMilnor} classification of homotopy spheres is that the collection $\Theta_d$ of h-cobordism classes of closed oriented $d$-manifolds with the homotopy type of a $d$-sphere forms an abelian group under taking connected sum. For $d\ge 5$, this group can be described equivalently as the group of oriented $d$-manifolds homeomorphic to the $d$-sphere, modulo orientation-preserving diffeomorphisms. Kervaire--Milnor established an exact sequence of the form
\[0\lra \bP_{d+1}\lra\Theta_{d}\lra\Coker(J)_d\xlra{\Kerv_d}\bfZ/2,\] where $\bP_{d+1}\subset\Theta_d$ is the subgroup of homotopy spheres bounding a parallelisable manifold and $\Coker(J)_d=\pi_d\bfS/\im(J)_d$ is the cokernel of the stable $J$-homomorphism $J\colon\pi_d\operatorname{O}\ra\pi_d\bfS$ to the stable homotopy groups of spheres. In particular, the group $\Theta_d$ is finite. Noting that $\bfS\simeq\mathbf{M}(\fr\colon\EO\ra\BO)$, the ring $\pi_*\bfS$ can be viewed equivalently as the ring $\Omega_*^{\fr}$ of bordism classes of stably framed manifolds. The morphism $\Theta_d\ra\Coker(J)_d$ is induced by assigning a homotopy sphere $\Sigma\in\Theta_d$ the class $[\Sigma,\ell]\in\Coker(J)_d$ of its underlying manifold together with a choice of a stable framing, using that homotopy spheres are stably parallelisable, and the morphism $\Kerv\colon\Coker(J)_d\ra\bfZ/2$ is the so-called \emph{Kervaire invariant}. Kervaire--Milnor showed that the subgroup $\bP_{d+1}\subset\Theta_d$ is cyclic and computed its order in most cases: for $d=4k-1$ and $k\ge2$ it is of order $2^{2k-2}(2^{2k-1}-1)\num(|4B_{2k}|/k)$, where $B_{2k}$ is the $2k$th Bernoulli number (this uses Adams'  \cite{Adams} computation of $\im(J)$ and Quillen's \cite{Quillen} solution of the Adams conjecture) and for $d=4k+1$ the group $\bP_{d+1}$ has order $2$ if $\Kerv_{d+1}=0$ and vanishes otherwise. Kervaire--Milnor showed that $\Kerv_d\neq0$ for $d=6,14$, Mahowald--Tangora \cite{MahowaldTangora} proved $\Kerv_{30}\neq0$, Barratt--Jones--Mahowald \cite{BarrattJonesMahowald} established $\Kerv_{62}\neq0$, Browder \cite{Browder} showed that $\Kerv_d=0$ if $d$ is not of the form $2^k-2$, and Hill--Hopkins--Ravenel \cite{HillHopkinsRavenel} proved that $\Kerv_{2^k-2}=0$ for $k\ge8$, leaving $2^7-2=126$ as the last unknown case.

\subsection{Parametrised Pontryagin--Thom theory}Let $M$ be a closed, connected, oriented manifold of dimension $2n$. Choose a Moore--Postnikov $n$-factorisation \[M\xlra{\ell_M}B\xlra{\theta}\BSO\] of its stable oriented tangent bundle; that is, a factorisation into an $n$-connected cofibration $\ell_M$ followed by an $n$-co-connected fibration $\theta$. This factorisation is unique up to weak equivalence under $M$ and over $\BSO$, and is called the \emph{tangential $n$-type of $M$}. Using this, the manifold $M$ naturally defines a class $[M,\ell_M]$ in the bordism group $\Omega^{\theta^\perp}_{2n}$ associated to its tangential $n$-type. We denote by $\theta_d\colon B_d\ra\BSO(d)$ the pullback of $\theta\colon B\ra\BSO$ along the canonical map $\BSO(d)\ra\BSO$.
For $d=2n$, this pullback induces a factorisation \begin{equation}\label{equation:unstablefactorisation}M\xlra{\overline{\ell}_M} B_{2n}\xlra{\theta_{2n}}\BSO(2n)\end{equation} of the unstable oriented tangent bundle of $M$, which can be seen to again be a Moore--Postnikov $n$-factorisation. We define $\MT\theta_d$ to be the Thom spectrum $\Thom(-{\theta_d}^*\gamma_d)$ of the inverse of the pullback of the canonical vector bundle $\gamma_d$ over $\BSO(d)$ along $\theta_d$. The topological monoid $\hAut(\theta_{d})$ of weak equivalences  $B_d\ra B_d$ over $\BSO(d)$ acts on ${\theta_d}^*\gamma_d$ by bundle automorphisms, inducing an action on the spectrum $\MT\theta_{d}$ and hence on its associated infinite loop space. The homotopy quotient $(\Omega^\infty\MT\theta_{d} )\dslash\hAut(\theta_{d})$ of this action is the target of the parametrised Pontryagin--Thom map which is subject of the following result of Galatius--Randal-Williams \cite[Cor.\,1.9]{GRWII}.

\begin{thm}[Galatius--Randal-Williams]\label{theorem:GRW}Let $M$ be a simply connected, closed, oriented manifold of dimension $2n\ge6$. There is a parametrised Pontryagin--Thom map
\[\BDiff(M)\lra(\Omega^\infty\MT\theta_{2n} )\dslash\hAut(\theta_{2n})\] which induces a homology isomorphism in degrees $*\le\frac{g(M)-3}{2}$ onto the path component hit.
\end{thm}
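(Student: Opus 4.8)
The plan is to split the statement into a \emph{stable} identification and a \emph{homological stability} part, and then descend along the $\hAut(\theta_{2n})$-action. First I would pass to the moduli space $\operatorname{BDiff}^{\theta_{2n}}(M)$ of manifolds diffeomorphic to $M$ equipped with a $\theta_{2n}$-structure, on which the parametrised Pontryagin--Thom construction defines a map $\operatorname{BDiff}^{\theta_{2n}}(M)\ra\Omega^\infty\MT\theta_{2n}$ refining the one in the statement; the goal becomes to show this map is a homology isomorphism onto the path components it hits in degrees $*\le\frac{g(M)-3}{2}$. Since this range is empty when $g(M)=0$, we may assume $g(M)\ge1$, write $M\cong N\sharp(S^n\times S^n)^{\sharp g(M)}$, and record that the standard handlebody $W_{g(M),1}$ embeds in $M\setminus\interior{D^{2n}}$ meeting the boundary sphere and inherits a $\theta_{2n}$-structure; in particular $\theta_{2n}$ is ``spherical''. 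It is convenient to work throughout with $M\setminus\interior{D^{2n}}$, using the standard fibration sequences relating closed and once-punctured moduli spaces to translate back at the end.

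I would then treat the stable range via the cobordism category. Introduce the $\theta_{2n}$-cobordism category $\mathcal C_{\theta_{2n}}$ and identify its classifying space by a scanning argument in the spirit of Galatius--Madsen--Tillmann--Weiss, $B\mathcal C_{\theta_{2n}}\simeq\Omega^{\infty-1}\MT\theta_{2n}$. Comparing with the submonoid of $\theta_{2n}$-structured cobordisms built from the $W_{g,1}$---whose group completion therefore maps to $\Omega^\infty\MT\theta_{2n}$---a surgery-below-the-middle-dimension argument (available because $\theta_{2n}$ is an $n$-type and the cobordisms in play are simply connected) shows this submonoid to be homology-cofinal. A group-completion theorem then identifies $\operatorname{hocolim}_g\operatorname{BDiff}^{\theta_{2n}}_\partial(W_{g,1})$, up to homology, with the union $\Omega^\infty_\bullet\MT\theta_{2n}$ of path components of $\Omega^\infty\MT\theta_{2n}$ hit by Pontryagin--Thom, compatibly with that map.

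It remains to feed in homological stability. Using a semi-simplicial space whose $p$-simplices are $(p{+}1)$-tuples of disjoint $\theta_{2n}$-structured genus-one pieces embedded in $W_{g,1}$ rel boundary, a Quillen-type spectral sequence argument shows the genus-stabilisation map $\operatorname{BDiff}^{\theta_{2n}}_\partial(W_{g,1})\ra\operatorname{BDiff}^{\theta_{2n}}_\partial(W_{g+1,1})$ to be a homology isomorphism for $*\le\frac{g-2}{2}$ and an epimorphism one degree higher; the crucial geometric input is that two such pieces can be made disjoint by a Whitney trick, which forces $n\ge3$, i.e.\ $2n\ge6$. Running the same machine along the embedded copy of $W_{g(M),1}$ in $M\setminus\interior{D^{2n}}$ shows that $\operatorname{BDiff}^{\theta_{2n}}_\partial(W_{g(M),1})\ra\operatorname{BDiff}^{\theta_{2n}}(M\setminus\interior{D^{2n}})$ is a homology isomorphism onto the components hit in degrees $*\le\frac{g(M)-3}{2}$; combining with the previous step gives $\oH_*(\operatorname{BDiff}^{\theta_{2n}}(M))\cong\oH_*(\Omega^\infty_\bullet\MT\theta_{2n})$ in that range, compatibly with Pontryagin--Thom.

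Finally I would descend to $\hAut(\theta_{2n})$-quotients: this monoid acts on $\theta_{2n}$-structures, hence on both sides; the Pontryagin--Thom map is equivariant; and---because the tangential $n$-type is a homotopy invariant of $M$---forgetting the structure identifies $\operatorname{BDiff}^{\theta_{2n}}(M)\dslash\hAut(\theta_{2n})\simeq\BDiff(M)$ onto the relevant components. Since forming homotopy quotients takes equivariant homology equivalences to homology equivalences, this produces the asserted map $\BDiff(M)\ra(\Omega^\infty\MT\theta_{2n})\dslash\hAut(\theta_{2n})$ together with its range of acyclicity. The hard part will be the homological stability input: showing the relevant semi-simplicial spaces of embedded pieces are sufficiently highly connected is a delicate surgery-and-transversality argument, and transporting stability from the model manifolds $W_{g,1}$ to an arbitrary $M$ through the invariant $g(M)$ requires an ``abstract'' form of the stability machinery; one must also check that the surgery behind the group-completion step survives for the \emph{unstable} $n$-type $\theta_{2n}$ rather than a stable tangential structure.
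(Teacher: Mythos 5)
The paper does not prove this statement; it is \cite[Cor.\,1.9]{GRWII} of Galatius and Randal-Williams, imported as a black box, so there is no in-paper argument to compare against. Your outline is nonetheless a fair summary of the strategy in \cite{GRWstable,GRWI,GRWII}: a scanning/cobordism-category identification of the stable moduli space with $\Omega^\infty_\bullet\MT\theta_{2n}$, homological stability for the $W_{g,1}$ via a semi-simplicial resolution by embedded handles (where the Whitney trick forces $2n\ge 6$), transport of stability to a general simply connected $M$ through the genus invariant, and descent along the $\hAut(\theta_{2n})$-action. A few points of precision: sphericity of $\theta_{2n}$ is automatic for any tangential $n$-type---$S^{2n}$ is stably parallelisable, so its classifying map to $\BSO$ is nullhomotopic and lifts through any nonempty $B$---so neither $g(M)\ge 1$ nor the embedded $W_{g,1}$ is needed for it; the tangential $n$-type is a smooth invariant built from the tangent bundle, not a homotopy invariant as you assert; and the phrases ``homology-cofinal'' and the descent identification $\operatorname{BDiff}^{\theta_{2n}}(M)\dslash\hAut(\theta_{2n})\simeq\BDiff(M)$ compress the genuinely technical parts of the argument (surgery on morphism spaces of the cobordism category in \cite{GRWstable}, and the identification of the fibre of $\operatorname{BDiff}^{\theta_{2n}}(M)\ra\BDiff(M)$ with a space of $\theta_{2n}$-structures via the Moore--Postnikov property), which in a serious attempt would be the hard part rather than a final formality.
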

Here $g(M)$ denotes the genus of the manifold $M$, as defined in the introduction.

\begin{rem}Theorem~\ref{theorem:GRW} is the higher-dimensional analogue of a pioneering result for surfaces obtained by combining a classical homological stability result due to Harer \cite{Harer} with the celebrated theorem of Madsen--Weiss \cite{MadsenWeiss}.
\end{rem}
\begin{rem}Recent work of Friedrich \cite{Friedrich} can be used to strengthen Theorem~\ref{theorem:GRW} to manifolds that are not simply connected, but whose associated group ring $\bfZ[\pi_1M]$ has finite unitary stable rank (cf.\,the remark in the introduction).
\end{rem}
\subsection{The path components of $\MT\theta_{2n}$}\label{section:pathcomponents}
The Pontryagin--Thom correspondence identifies the group of path components $\pi_0\MT\theta_{2n}$ with the bordism group of closed smooth manifolds $N$ of dimension $2n$ together with a map $\ell\colon N\ra B_{2n}$ and a stable isomorphism $\varphi\colon TN\cong_s\ell^*\theta_{2n}^*\gamma_{2n}$, where a weak self-equivalence $[g]\in\pi_0\hAut(\theta_{2n})$ of $B_{2n}$ over $\BSO(2n)$ acts on such a triple $[N,\ell,\varphi]\in\pi_0\MT\theta_{2n}$ by replacing $\ell$ with $g\ell$ (see e.g.\,\cite[Thm A.2]{Ebert}). These bordism classes carry a canonical orientation induced from the universal orientation of $\gamma_{2n}$ by means of $\varphi$ and $\ell$, and this orientation is preserved by the $\pi_0\hAut(\theta_{2n})$-action. Note that the factorisation \eqref{equation:unstablefactorisation} provides a distinguished bordism class $[M,\overline{\ell}_M,\operatorname{id}]\in \pi_0\MT\theta_{2n}$  of the initial manifold $M$ defining $\theta$. There is a cofibre sequence \begin{equation}\label{equation:cofibresequence}\MT\theta_{d+1}\lra\Sigma_+^\infty B_{d+1}\lra\MT\theta_d\end{equation} for $d\ge0$ and a $(d+1)$-connected stabilisation map
\begin{equation}\label{equation:stabilisationmap}\Sigma^{d+1}\MT\theta_{d+1}\lra\M\theta^\perp,\end{equation} which in particular identifies $\pi_{-1}\MT\theta_{d+1}$ with the bordism group $\Omega_{d}^{\theta^\perp}$ (see \cite[Ch.\,3, 5]{GMTW}). For $d=2n$ and $d=2n+1$, this induces a morphism of the form
\begin{equation}\label{equation:pathcomponents}\pi_0\MT\theta_{2n}\lra\pi_0\Sigma_+^\infty B_{2n}\oplus\pi_{-1}\MT\theta_{2n+1}\cong\bfZ\oplus\Omega_{2n}^{\theta^\perp}\end{equation} whose source and target are naturally acted upon by $\pi_0\hAut(\theta_{2n})$; the action on the target fixes the $\bfZ$-summand and changes the $\theta$-structure on $\Omega_{2n}^{\theta^\perp}$.

\begin{lem}\label{lemma:injectivity}
The morphism \eqref{equation:pathcomponents} is injective and $\pi_0\MT\theta_{2n}$-equivariant.
\end{lem}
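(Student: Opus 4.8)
The plan is to handle the two components of \eqref{equation:pathcomponents} one at a time. Write $p\colon\MT\theta_{2n}\to\Sigma_+^\infty B_{2n}$ and $q\colon\Sigma_+^\infty B_{2n+1}\to\MT\theta_{2n}$ for the maps appearing in the instances of \eqref{equation:cofibresequence} involving these spectra, and $\partial\colon\MT\theta_{2n}\to\Sigma\MT\theta_{2n+1}$ for the connecting map of the cofibre sequence $\MT\theta_{2n+1}\to\Sigma_+^\infty B_{2n+1}\xrightarrow{q}\MT\theta_{2n}$. After the identifications $\pi_0\Sigma_+^\infty B_{2n}\cong\bfZ$ (as $B_{2n}$ is connected) and $\pi_{-1}\MT\theta_{2n+1}\cong\Omega_{2n}^{\theta^\perp}$ (via \eqref{equation:stabilisationmap}, which is $(2n+1)$-connected), the morphism \eqref{equation:pathcomponents} becomes $(p_*,\partial_*)$, so its kernel is $\ker(p_*)\cap\ker(\partial_*)$. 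The $\pi_0\hAut(\theta_{2n})$-equivariance will come for free from the naturality of all the maps in sight, so the real content is that this intersection vanishes.

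First I would note that $\ker(\partial_*)$ is cyclic. Indeed $B_{2n+1}$ is connected, so $\pi_0\Sigma_+^\infty B_{2n+1}\cong\bfZ$, and exactness of the $\pi_0$-sequence of \eqref{equation:cofibresequence} at $\MT\theta_{2n}$ gives $\ker(\partial_*)=\im(q_*)$, which is generated by $\xi\coloneqq q_*(1)$. Hence it is enough to show $p_*(\xi)\neq0$ in $\bfZ\cong\pi_0\Sigma_+^\infty B_{2n}$: then $\xi$ has infinite order, $\ker(\partial_*)\cong\bfZ\cdot\xi$, and $p_*$ restricts to an injection on it, so $\ker(p_*)\cap\ker(\partial_*)=0$.

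To compute $p_*(\xi)=(p\circ q)_*(1)$, I would make the stable map $p\circ q\colon\Sigma_+^\infty B_{2n+1}\to\Sigma_+^\infty B_{2n}$ explicit. The cofibre sequence \eqref{equation:cofibresequence} is the Thom--Gysin sequence of a sphere bundle: identifying $\BSO(2n)$ with the sphere bundle of $\gamma_{2n+1}$ over $\BSO(2n+1)$ and pulling back along $\theta_{2n+1}$, the map $B_{2n}\to B_{2n+1}$ becomes the $S^{2n}$-bundle $\pi\colon S(\theta_{2n+1}^*\gamma_{2n+1})\to B_{2n+1}$, and $\theta_{2n}^*\gamma_{2n}$ becomes its fibrewise tangent bundle, restricting to $TS^{2n}$ on each fibre. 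Under the Thom isomorphisms $\oH^*(\MT\theta_{2n})\cong\oH^{*+2n}(B_{2n})$ and $\oH^*(\Sigma_+^\infty B_d)\cong\oH^*(B_d)$, the map $p^*$ is cup product with the Euler class $e(\theta_{2n}^*\gamma_{2n})\in\oH^{2n}(B_{2n})$ (as $p$ is induced by the zero section of $\theta_{2n}^*\gamma_{2n}$), and $q^*$ is fibre integration $\pi_!\colon\oH^{*+2n}(B_{2n})\to\oH^*(B_{2n+1})$ (as $q$ is the Thom--Gysin connecting map). Therefore on $\oH^0$ one has $(p\circ q)^*(1)=\pi_!\bigl(e(\theta_{2n}^*\gamma_{2n})\bigr)=\bigl\langle e(TS^{2n}),[S^{2n}]\bigr\rangle=\chi(S^{2n})=2$; since $\Sigma_+^\infty B_{2n}$ and $\Sigma_+^\infty B_{2n+1}$ are connective with $\pi_0\cong\oH_0\cong\bfZ$, dualising gives $(p\circ q)_*(1)=\pm2\neq0$. (Concretely, $\xi$ is represented by an embedded fibre sphere $S^{2n}\subset B_{2n}$ with its tautological $\theta_{2n}$-structure, and $p_*$ takes $[N,\ell,\varphi]$ to the Euler characteristic $\chi(N)$, so that $p_*(\xi)=\chi(S^{2n})=2$.) This establishes injectivity.

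Finally, for equivariance: the maps $p$, $q$, $\partial$ of \eqref{equation:cofibresequence}, the stabilisation map \eqref{equation:stabilisationmap}, and the Thom and Gysin isomorphisms are all natural with respect to self-equivalences of $B_{2n}$ over $\BSO(2n)$ and the induced self-equivalences of $B_{2n+1}$, $\MT\theta_{2n}$, $\MT\theta_{2n+1}$ and $\M\theta^\perp$; since $\pi_0\hAut(\theta_{2n})$ acts trivially on $\bfZ\cong\pi_0\Sigma_+^\infty B_{2n}$ (it fixes the point class of the connected space $B_{2n}$) and by change of $\theta^\perp$-structure on $\Omega_{2n}^{\theta^\perp}$, the morphism $(p_*,\partial_*)$ is $\pi_0\hAut(\theta_{2n})$-equivariant. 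The step I expect to require the most care is the identification of \eqref{equation:cofibresequence} with the Thom--Gysin sequence of $B_{2n}\to B_{2n+1}$ and the consequent descriptions of $p^*$ and $q^*$ (keeping track of the Thom isomorphisms and of the fact that $\theta_{2n}^*\gamma_{2n}$ restricts to the tangent bundle on each fibre $S^{2n}$); after that the only numerical input is $\chi(S^{2n})=2$, and the equivariance is purely formal.
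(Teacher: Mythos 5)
Your proof is correct and follows the same overall strategy as the paper: combine the two instances of \eqref{equation:cofibresequence}, use exactness of the $\pi_0$-row, and reduce everything to the key fact that the composite $\pi_0\Sigma_+^\infty B_{2n+1}\to\pi_0\MT\theta_{2n}\to\pi_0\Sigma_+^\infty B_{2n}$ is multiplication by $\chi(S^{2n})=2$. Where you differ is in how this composite is computed: the paper works geometrically through the bordism descriptions of the maps (the generator of $\pi_0\Sigma_+^\infty B_{2n+1}$ hits $[S^{2n},\ell,\operatorname{id}]$, and the map to $\pi_0\Sigma_+^\infty B_{2n}$ evaluates $e(\ell^*\theta_{2n}^*\gamma_{2n})$ on the fundamental class), whereas you identify \eqref{equation:cofibresequence} with the Thom--Gysin sequence of the sphere bundle $B_{2n}\to B_{2n+1}$ and compute $(p\circ q)^*(1)=\pi_!\bigl(e(\theta_{2n}^*\gamma_{2n})\bigr)=\chi(S^{2n})$ on cohomology before dualising. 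Both routes hinge on recognising $\theta_{2n}^*\gamma_{2n}$ as the vertical tangent bundle of the sphere bundle, so the mathematical content is the same; the cohomological version avoids unwinding the geometric Pontryagin--Thom isomorphism at the cost of bookkeeping the Thom and Gysin isomorphisms. Your equivariance argument by naturality is a slight reformulation of, and equivalent to, the paper's observation that the Euler number is preserved because the action fixes the orientation. One small imprecision in your parenthetical: $p_*$ does not take $[N,\ell,\varphi]$ to $\chi(N)$ in general, but rather to $\langle e(\ell^*\theta_{2n}^*\gamma_{2n}),[N]\rangle$, which agrees with $\chi(N)$ only when $\varphi$ is an honest unstable isomorphism $TN\cong\ell^*\theta_{2n}^*\gamma_{2n}$; this does hold for $\xi=[S^{2n},\ell,\operatorname{id}]$, so the conclusion $p_*(\xi)=2$ is unaffected.
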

\begin{proof}The cofibre sequence \eqref{equation:cofibresequence} induces a diagram
\begin{center}
\begin{tikzcd}
\pi_0\Sigma_+^\infty B_{2n+1}\arrow[dr,dashed]\arrow[r]&\pi_0\MT\theta_{2n}\arrow[d]\arrow[r]&\Omega_{2n}^{\theta^\perp}\arrow[r]&0\\
&\pi_0\Sigma_+^\infty B_{2n}&&
\end{tikzcd}
\end{center}
with exact upper row, which can be described geometrically as follows (see e.g.\,the proof of \cite[Prop.\,1.2.7]{Gollinger}): the map $\bfZ\cong\pi_0\Sigma_+^\infty B_{2n+1}\ra \pi_0\MT\theta_{2n}$ sends a generator to $[S^{2n},\ell,\operatorname{id}]\in\pi_0\MT\theta_{2n}$ where $\ell$ is induced by the tangent bundle $S^{2n}\ra\BSO(2n)$ and its standard stable framing. The map $\pi_0\MT\theta_{2n}\ra \pi_0\Sigma_+^\infty B_{2n}\cong\bfZ$ maps a bordism class $[M,\ell,\varphi]$ to the evaluation $\langle e(\ell^*\theta_{2n}^*\gamma_{2n});[M]\rangle\in\bfZ$ of the Euler class. Finally, the map $\pi_0\MT\theta_{2n}\ra\Omega_{2n}^{\theta^\perp}$ sends a triple $[M,\ell,\varphi]$ to the class of $M$ equipped with the tangential $\theta$-structure induced by $\ell$ and $\varphi$. The claim regarding equivariance follows from noting that the action by $\pi_0\hAut(\theta_{2n})$ leaves $\langle e(\ell^*\theta_{2n}^*\gamma_{2n});[M]\rangle$ unchanged as it fixes the orientation. Since the dashed arrow is injective since it is up to isomorphism given by multiplication with $\langle e(TS^{2n}),[S^{2n}]\rangle=2$, the injectivity part of the claim is a consequence of exactness.\end{proof}

Justified by this, we identify $\pi_0\MT\theta_{2n}$ as a subgroup of $\bfZ\oplus\Omega_{2n}^{\theta^\perp}$ henceforth. The path components of $(\Omega^\infty\MT\theta_{2n}) \dslash\hAut(\theta_{2n})$ agree with the quotient \[\pi_0\MT\theta_{2n}/\pi_0\hAut(\theta_{2n})\subset \bfZ\times \Omega_{2n}^{\theta^\perp}/\pi_0\hAut(\theta_{2n})\] and the path component hit by the parametrised Pontryagin--Thom map  \[\BDiff(M)\lra(\Omega^\infty\MT\theta_{2n}) \dslash\hAut(\theta_{2n})\] is the one induced by the bordism class $[M,\overline{\ell}_M,\operatorname{id}]$, so it is represented by $(\chi(M),[M,\ell_{M}])\in\pi_0\MT\theta_{2n}$, where $\chi(M)\in\bfZ$ is the Euler characteristic of $M$ (cf.\,\cite[Ch.\,3]{GMTW}). We denote this path component of the homotopy quotient by $\big(\Omega^\infty\MT\theta_{2n}\dslash\hAut(\theta_{2n})\big)_M$ and the one of $\Omega^\infty\MT\theta_{2n}$ corresponding to $(\chi(M),[M,\ell_{M}])$ by $\Omega^\infty_{(M,\ell_M)}\MT\theta_{2n}$. Note that the inclusion $\Omega_{(M,\ell_M)}^\infty\MT\theta_{2n}\subset \Omega^\infty\MT\theta_{2n}$ induces a weak equivalence \begin{equation}\label{equation:equivalence}\Omega_{(M,\ell_M)}^\infty\MT\theta_{2n}\dslash\Stab(M,\ell_M)\simeq \big(\Omega^\infty\MT\theta_{2n}\dslash\hAut(\theta_{2n})\big)_M,\end{equation} where $\Stab(M,\ell_M)\subset\hAut(\theta_{2n})$ is the submonoid defined as the union of the path components of $\hAut(\theta_{2n})$ that fix $[M,\ell_{M}]\in\Omega_{2n}^{\theta^\perp}$.

\subsection{Proof of Theorem~\ref{theorem:maintheorem}}

By obstruction theory, the tangential $n$-type of a closed oriented $2n$-manifold $M$ depends only on the manifold $M\backslash \interior{D^{2n}}$ obtained by cutting out an embedded disc $D^{2n}\subset M$.\footnote{In fact, it only depends on the $n$-skeleton of the manifold.} This implies that the tangential $n$-type 
of the connected sum $M\sharp\Sigma$ of $M$ with a homotopy sphere $\Sigma\in\Theta_{2n}$ has the form \[M\sharp \Sigma \xlra{\ell_{M\sharp\Sigma}}B\xlra{\theta}\BSO\] for the same $B$ and $\theta$ as for $M$. Here $\ell_{M\sharp\Sigma}$ is the unique (up to homotopy) extension to $M\sharp \Sigma$ of the restriction of $\ell_M$ to $M\backslash \interior{D^{2n}}$, where $D^{2n}\subset M$ is the disc at which the connected sum was taken. The targets of the parametrised Pontryagin--Thom maps of Theorem~\ref{theorem:GRW} for $M$ and $M\sharp\Sigma$ thus agree,
\[\BDiff(M)\lra(\Omega^\infty\MT\theta_{2n}) \dslash\hAut(\theta_{2n})\lla \BDiff(M\sharp\Sigma),\] and both maps induce an isomorphism on homology in a range of degrees onto the respective path components hit. However, different path components of the target space will usually have nonisomorphic homology; nonetheless, we have the following lemma comparing the two components in question.

\begin{lem}\label{lemma:zigzag}For a closed, oriented $2n$-manifold $M$ and $\Sigma\in\Theta_{2n}$, there is a zig-zag between\[\big(\Omega^\infty\MT\theta_{2n} \dslash\hAut(\theta_{2n})\big)_M\quad\text{and}\quad\big(\Omega^\infty\MT\theta_{2n} \dslash\hAut(\theta_{2n})\big)_{M\sharp\Sigma},\] inducing homology isomorphisms with $\bfZ[\frac{1}{k}]$-coefficients, $k$ being the order of $\Sigma$.
\end{lem}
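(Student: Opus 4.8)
The plan is to compare the two path components by studying how the $\pi_0\hAut(\theta_{2n})$-orbits of the classes $[M,\ell_M]$ and $[M\sharp\Sigma,\ell_{M\sharp\Sigma}]$ in $\Omega_{2n}^{\theta^\perp}$ are related, and then to transfer this information to the infinite loop spaces via the equivalence \eqref{equation:equivalence}. The first observation is that the connected sum with $\Sigma$ changes the bordism class only by the image of $[\Sigma]$ under the canonical map $\Theta_{2n}\to\Omega^{\fr}_{2n}\to\Omega^{\theta^\perp}_{2n}$; indeed $M\sharp\Sigma$ is $\theta^\perp$-bordant to $M\sqcup\Sigma$ (glue in a cylinder on the pair of discs where the connected sum was formed), and $\Sigma$, being stably parallelisable, carries a canonical framed — hence $\theta^\perp$ — structure compatible with the one induced on $M\sharp\Sigma$. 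Thus $[M\sharp\Sigma,\ell_{M\sharp\Sigma}]=[M,\ell_M]+s_*[\Sigma]$ in $\Omega_{2n}^{\theta^\perp}$, where $s\colon\Omega^{\fr}_{2n}\to\Omega^{\theta^\perp}_{2n}$ is the natural map. Since $\Sigma$ has order $k$ in $\Theta_{2n}$, and $k$ annihilates the image of $\Theta_{2n}$ anywhere it maps, the element $s_*[\Sigma]$ has order dividing $k$ in $\Omega^{\theta^\perp}_{2n}$; in particular $k$ times the two bordism classes agree.

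Next I would exhibit the zig-zag at the level of spectra and their infinite loop spaces. Multiplication by $k$ on the spectrum $\MT\theta_{2n}$ — or rather the stabilisation/suspension maps together with the self-maps $\cdot k$ — induces a self-map of $\Omega^\infty\MT\theta_{2n}$ that on $\pi_0\subset\bfZ\oplus\Omega^{\theta^\perp}_{2n}$ acts as multiplication by $k$, hence sends the component $\Omega^\infty_{(M,\ell_M)}\MT\theta_{2n}$ and the component $\Omega^\infty_{(M\sharp\Sigma,\ell_{M\sharp\Sigma})}\MT\theta_{2n}$ to a common component, namely the one indexed by $k\cdot(\chi(M),[M,\ell_M])=k\cdot(\chi(M\sharp\Sigma),[M\sharp\Sigma,\ell_{M\sharp\Sigma}])$ (using $\chi(M)=\chi(M\sharp\Sigma)$ as $2n$ is even and $\chi(S^{2n})=2$, so $\chi(M\sharp\Sigma)=\chi(M)+\chi(S^{2n})-2=\chi(M)$). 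On an infinite loop space, multiplication by $k$ is a map which, between any two path components it identifies, becomes an equivalence after inverting $k$ — because $\Omega^\infty\MT\theta_{2n}$ is an infinite loop space, $[k]\colon\Omega^\infty\MT\theta_{2n}\to\Omega^\infty\MT\theta_{2n}$ is, on each fixed component, $\bfZ[\tfrac1k]$-homologically invertible since it induces multiplication by $k$ on all homotopy groups, which is a $\bfZ[\tfrac1k]$-isomorphism. This gives a zig-zag of $\bfZ[\tfrac1k]$-homology equivalences between $\Omega^\infty_{(M,\ell_M)}\MT\theta_{2n}$ and $\Omega^\infty_{(M\sharp\Sigma,\ell_{M\sharp\Sigma})}\MT\theta_{2n}$.

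It remains to pass to the homotopy quotients by $\Stab(M,\ell_M)$ and $\Stab(M\sharp\Sigma,\ell_{M\sharp\Sigma})$ and invoke \eqref{equation:equivalence}. Here one uses that the stabiliser submonoids coincide: since $[M\sharp\Sigma,\ell_{M\sharp\Sigma}]$ differs from $[M,\ell_M]$ by the fixed element $s_*[\Sigma]$, and the $\pi_0\hAut(\theta_{2n})$-action on $\Omega^{\theta^\perp}_{2n}$ is through $\bfZ$-module automorphisms that (by naturality of the Pontryagin--Thom identification) fix the image of the framed bordism group — as framed structures are pulled back from a point — a component of $\hAut(\theta_{2n})$ fixes $[M,\ell_M]$ if and only if it fixes $[M,\ell_M]+s_*[\Sigma]=[M\sharp\Sigma,\ell_{M\sharp\Sigma}]$; hence $\Stab(M,\ell_M)=\Stab(M\sharp\Sigma,\ell_{M\sharp\Sigma})=:\Stab$. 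The multiplication-by-$k$ self-map of $\MT\theta_{2n}$ is $\hAut(\theta_{2n})$-equivariant (it is induced functorially on the spectrum), so it descends to a self-map of $\Omega^\infty\MT\theta_{2n}\dslash\hAut(\theta_{2n})$, restricting to a map of homotopy quotients $\Omega^\infty_{(M,\ell_M)}\MT\theta_{2n}\dslash\Stab\to\Omega^\infty_{k\cdot(M,\ell_M)}\MT\theta_{2n}\dslash\Stab$, and likewise for $M\sharp\Sigma$; combining with \eqref{equation:equivalence} yields the desired zig-zag. A Serre spectral sequence (or bar-construction) argument over $B\Stab$, together with the fact that the map of fibres is a $\bfZ[\tfrac1k]$-homology equivalence, gives that the maps of homotopy quotients are $\bfZ[\tfrac1k]$-homology equivalences.

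The main obstacle I expect is the bookkeeping in the last step: one must be careful that the self-map of the homotopy quotient is genuinely $\Stab$-equivariant and that the comparison component $k\cdot(M,\ell_M)$ really is the same for $M$ and $M\sharp\Sigma$ at the level of $\Stab$-orbits, not merely $\pi_0\hAut(\theta_{2n})$-orbits; and one must check that the relevant components of $\Omega^\infty\MT\theta_{2n}$ are nonempty and that the fibrewise multiplication-by-$k$ map is well-defined as a map of spaces over $B\Stab$ (rather than just up to homotopy), for which the bar-construction model of the homotopy quotient is the cleanest. The homotopy-theoretic input — that $[k]$ is a $\bfZ[\tfrac1k]$-equivalence on each component of an infinite loop space — is routine.
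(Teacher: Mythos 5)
Your proposal follows essentially the same route as the paper: establish $[M\sharp\Sigma,\ell_{M\sharp\Sigma}]=[M,\ell_M]+[\Sigma,\ell_\Sigma]$ via the standard bordism, note that $k\cdot[\Sigma,\ell_\Sigma]=0$ because $\Theta_{2n}\to\Omega_{2n}^{\theta^\perp}$ is a homomorphism, use multiplication by $k$ on $\Omega^\infty\MT\theta_{2n}$ to connect the two components (it is a $\bfZ[\tfrac1k]$-homology equivalence since it is so on homotopy groups of an infinite loop space), identify the stabiliser submonoids, descend $\hAut(\theta_{2n})$-equivariantly to the homotopy quotients, and compare Serre spectral sequences. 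This is exactly the paper's argument.

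The one step where your justification is weaker than the paper's is the identification $\Stab(M,\ell_M)=\Stab(M\sharp\Sigma,\ell_{M\sharp\Sigma})$. You assert that the $\pi_0\hAut(\theta_{2n})$-action fixes the whole image of framed bordism in $\Omega_{2n}^{\theta^\perp}$ because ``framed structures are pulled back from a point''; as stated this is a heuristic rather than an argument, and it implicitly relies on the fibre of $\theta$ over the basepoint of $\BSO$ being path-connected (true here since $\pi_1\BSO=0$, but unstated). The paper's argument is more direct and avoids this: by obstruction theory, the homotopy $2n$-sphere $\Sigma$ --- being $(2n-1)$-connected while $\theta$ is $n$-co-connected --- admits a \emph{unique} tangential $\theta$-structure compatible with its orientation, and since the $\hAut(\theta_{2n})$-action preserves the induced orientation, it must fix $[\Sigma,\ell_\Sigma]$. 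With that point tightened, your proof is correct and coincides with the paper's.
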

\begin{proof}The stable oriented tangent bundle $\Sigma\ra\BSO$ of $\Sigma$ lifts to a unique tangential $\theta$-structure $\ell_\Sigma$ of $\Sigma$ and, by obstruction theory, the standard bordism between the connected sum $M\sharp \Sigma$ and the disjoint union of $M$ and $\Sigma$ extends to a bordism respecting the $\theta$-structure. This gives $[M\sharp\Sigma,\ell_{M\sharp \Sigma}]=[M,\ell_M]+[\Sigma,\ell_\Sigma]$ in $\Omega^{\theta^\perp}_{2n}$ and by the same argument, we obtain a morphism $\Theta_{2n}\ra\Omega_{2n}^{\theta^\perp}$ sending $\Sigma\in\Theta_{2n}$ to $[\Sigma,\ell_\Sigma]$, from which we conclude that the order of $[\Sigma,\ell_\Sigma]\in\Omega_{2n}^{\theta^\perp}$ divides $k$. As the Euler characteristics of $M$ and $M\sharp\Sigma$ agree, we have \[k\cdot(\chi(M\sharp\Sigma),[M\sharp\Sigma,\ell_M\sharp\Sigma])=k\cdot(\chi(M),[M,\ell_M])\] in $\pi_0\MT\theta_{2n}$. Multiplication by $k$ in the infinite loop space $\Omega^\infty\MT\theta_{2n}$ hence maps the path components $(\chi(M),[M,\ell_M])$ and $(\chi(M\sharp\Sigma),[M\sharp\Sigma,\ell_{M\sharp\Sigma}])$ to the same path component, denoted $\Omega^\infty_{k\cdot (M,\ell_M)}\MT\theta_{2n}$. As observed above, the homotopy sphere $\Sigma$ has a unique $\theta$-structure compatible with its orientation, so the class $[\Sigma,\ell_\Sigma]$ in $\Omega^{\theta^\perp}_{2n}$ is fixed by the action of $\hAut(\theta)$, which in turn implies $\Stab(M,\ell_M)=\Stab(M\sharp\Sigma,\ell_{M\sharp\Sigma})\subset\hAut(\theta_{2n})$. Since multiplication by $k$ in $\Omega^\infty\MT\theta_{2n}$ is $\hAut(\theta_{2n})$-equivariant, we obtain an induced zig-zag \begin{center}
\begin{tikzcd}[column sep=-2.5cm]
&(\Omega^\infty_{k\cdot (M,\ell_M)}\MT\theta_{2n})\dslash\Stab(M,\ell_M)&\\
(\Omega_{(M,\ell_M)}^\infty\MT\theta_{2n})\dslash\Stab(M,\ell_M)\arrow[ur,"k\cdot-"]&&(\Omega_{(M\sharp\Sigma,\ell_{M\sharp\Sigma})}^\infty\MT\theta_{2n})\dslash\Stab(M\sharp\Sigma,\ell_{M\sharp\Sigma})\arrow[ul,"k\cdot-",swap].
\end{tikzcd}
\end{center}
The corresponding zig-zag between the respective path components of $\Omega^\infty\MT\theta_{2n}$ before taking homotopy quotients is given by multiplication by $k$, so induces an isomorphism on homology with $\bfZ[\frac{1}{k}]$-coefficients. The claim now follows from a comparison of the Serre spectral sequences of the homotopy quotients, together with the equivalences \eqref{equation:equivalence}. \end{proof}

By taking connected sums with $\Sigma\in\Theta_{2n}$ and its inverse, one sees that the genera of $M$ and $M\sharp \Sigma$ coincide, so Theorem~\ref{theorem:maintheorem} is a direct consequence of Theorem~\ref{theorem:GRW} and Lemma~\ref{lemma:zigzag}.

\section{The collar twist}
In this section, we examine the homotopy fibre sequence
\begin{equation}
\label{equation:fibresequencediffeogroups}
\BDiffuo(M,D^{d})\lra\BDiff(M,*)\lra \BSO(d)
\end{equation}for a closed oriented $d$-manifold $M$ with an embedded disc $D^d\subset M$, induced by the fibration $\der\colon \Diff(M,*)\ra\SO(d)$ of topological groups that assigns a diffeomorphism fixing a basepoint $*\in M$ its derivative at this point. Its fibre at the identity is the subgroup $\Diffuo(M,T_*M)\subset\Diff(M,*)$ of diffeomorphisms fixing the tangent space at $*\in M$, which contains all diffeomorphisms that pointwise fix an embedded disc $D^d\subset M$ with centre $*\in M$ as a homotopy equivalent subgroup $\Diffuo(M,D^{d})\subset\Diffuo(M,T_*M)$, explaining the sequence \eqref{equation:fibresequencediffeogroups}. We are particularly interested in the effect on homotopy groups of the map $t\colon \SO(d)\ra\BDiffuo(M,D^d)$ induced from looping \eqref{equation:fibresequencediffeogroups} once. Fixing a collar $c\colon [0,1]\times S^{d-1}\ra M\backslash \text{int}(D^{d})$ satisfying $c^{-1}(\partial D^{d})=\{1\}\times S^{d-1}$, the map $t$ agrees with the delooping of the morphism $\Omega\SO(d)\ra\Diffuo(M,D^{d})$ sending a smooth loop $\gamma\in\Omega\SO(d)$ to the diffeomorphism that is the identity on $D^{d}$ as well as outside the collar and
\[\mapnoname{[0,1]\times S^{d-1}}{[0,1]\times S^{d-1}}{(t,x)}{(t,\gamma(t)x)}\] on the collar.\footnote{This is because $\gamma\in\Omega\SO(d)$ lifts along $\Diff(M,*)\ra\SO(d)$ to $\tilde{\gamma}\colon [0,1]\ra\Diff(M,*)$ satisfying $\tilde{\gamma}(0)=\operatorname{id}$ and $\tilde{\gamma}(1)\in\Diffuo(M,D^{d})$ by defining $\tilde{\gamma}(t)$ on $D^d$ by acting with $\gamma(t)$, on $[0,1]\times S^{d-1}$ by sending $(s,x)\in[0,1]\times S^{d-1}$ to $(s,\gamma(s) x)$ if $s\le t$ and to $(s,\gamma(t)x)$ if $s>t$, and to be the identity elsewhere. Strictly speaking, $\tilde{\gamma}$ is only a path of homeomorphisms, but this can easily be remedied by using a suitable family of bump functions.}
Inspired by this geometric description, we call the map \[t\colon\SO(d)\lra\BDiffuo(M,D^{d})\] the \emph{collar twist of $M$}.

\begin{rem}In dimension $d=2$, the collar twist $\SO(2)\rightarrow\BDiffuo(M,D^{2})$ is clearly trivial on homotopy groups except on fundamental groups, where the induced map $\bfZ\ra\pi_0\BDiffuo(M,D^{2})$ is given by a Dehn twist on $[0,1]\times S^{1}\subset M$.
\end{rem}

\subsection{Triviality of the collar twist}As indicated by the following lemma, the collar twist serves as a measure for the degree of linear symmetry of the underlying manifold.

\begin{lem}\label{lemma:action}Let $M$ be a closed oriented $d$-manifold that admits a smooth orientation-preserving action of $\SO(k)$ with $k\le d$. If the action has a fixed point $*\in M$ whose tangential representation is the restriction of the standard representation of $\SO(d)$ to $\SO(k)$, then the long exact sequence induced by the fibre sequence \eqref{equation:fibresequencediffeogroups} reduces to split short exact sequences \[0\lra\pi_i\BDiffuo(M,D^{d})\lra\pi_i\BDiff(M,*)\lra\pi_{i-1}\SO(d)\lra0\] for $i\le k-1$. 
In particular, the collar twist is trivial on homotopy groups in this range.
\end{lem}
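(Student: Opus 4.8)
The plan is to use the given $\SO(k)$-action to split the derivative fibration $\der\colon\Diff(M,*)\ra\SO(d)$ on homotopy groups throughout the relevant range, and then to read off both claims from the long exact sequence of \eqref{equation:fibresequencediffeogroups}. After fixing an orientation-compatible linear identification $T_*M\cong\mathbf{R}^d$, the smooth orientation-preserving $\SO(k)$-action fixing $*$ corresponds to a continuous homomorphism $a\colon\SO(k)\ra\Diff(M,*)$, and the hypothesis on the tangential representation at $*$ says precisely that $\der\circ a\colon\SO(k)\ra\SO(d)$ agrees, up to conjugation, with the standard inclusion $\iota\colon\SO(k)\hookrightarrow\SO(d)$. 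Passing to classifying spaces yields a map $Ba\colon\BSO(k)\ra\BDiff(M,*)$ with $(B\der)\circ(Ba)\simeq B\iota$. I would then invoke the classical fact that $\SO(d)/\SO(k)$ is the Stiefel manifold of orthonormal $(d-k)$-frames in $\mathbf{R}^d$ and hence $(k-1)$-connected; by the long exact sequence of the fibration $\SO(k)\ra\SO(d)\ra\SO(d)/\SO(k)$ this means that $\iota_*\colon\pi_i\SO(k)\ra\pi_i\SO(d)$ is surjective for $i\le k-1$ and bijective for $i\le k-2$, equivalently that $B\iota\colon\BSO(k)\ra\BSO(d)$ is $k$-connected.

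With this in hand I would examine the long exact sequence of \eqref{equation:fibresequencediffeogroups}, which (using $\Omega BG\simeq G$) takes the form
\[\pi_i\Diff(M,*)\xlra{\der_*}\pi_i\SO(d)\xlra{t_*}\pi_i\BDiffuo(M,D^{d})\lra\pi_i\BDiff(M,*)\lra\pi_{i-1}\SO(d)\xlra{t_*}\pi_{i-1}\BDiffuo(M,D^{d}),\]
where the two displayed maps out of the groups $\pi_*\SO(d)$ are induced by the collar twist $t$, by its very definition as the connecting map of \eqref{equation:fibresequencediffeogroups}. Since $\der_*\circ a_*=\iota_*\colon\pi_i\SO(k)\ra\pi_i\SO(d)$ is surjective for $i\le k-1$, the map $\der_*$ is surjective in that range, so exactness forces $t_*=0$ on $\pi_i\SO(d)$ for all $i\le k-1$; this already gives the final assertion of the lemma. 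For $i\le k-1$ the neighbouring occurrence of $t_*$, on $\pi_{i-1}\SO(d)$ with $i-1\le k-1$, vanishes as well, so the long exact sequence collapses into the short exact sequences $0\ra\pi_i\BDiffuo(M,D^{d})\ra\pi_i\BDiff(M,*)\xlra{(B\der)_*}\pi_{i-1}\SO(d)\ra0$ of the statement. Finally, for such $i$ the composite $(B\der)_*\circ(Ba)_*=(B\iota)_*\colon\pi_i\BSO(k)\ra\pi_i\BSO(d)=\pi_{i-1}\SO(d)$ is an isomorphism, because $i\le k-1$ and $B\iota$ is $k$-connected; hence $(Ba)_*\circ\big((B\iota)_*\big)^{-1}$ is a section of $(B\der)_*$ and the short exact sequence splits.

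I do not anticipate a genuine obstacle; the argument is a formal consequence of the existence of the lift $a$ of $\iota$ along $\der$ together with the connectivity of Stiefel manifolds. The only points deserving attention are the bookkeeping of the connectivity range --- surjectivity of $\iota_*$ on $\pi_{k-1}$ is what is needed for the vanishing of the collar twist, whereas the splitting uses that $(B\iota)_*$ is an isomorphism on $\pi_i$, i.e.\ that $\iota_*$ is bijective on $\pi_{i-1}$ (so $i-1\le k-2$), and these two requirements are jointly responsible for the stated range $i\le k-1$ --- and the innocuous remark that changing the identification $T_*M\cong\mathbf{R}^d$ only conjugates $\der\circ a$ and therefore does not affect any statement about homotopy groups.
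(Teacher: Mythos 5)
Your proof is correct and follows essentially the same line as the paper's own (very terse) argument: the $\SO(k)$-action furnishes a homomorphic lift $a$ of the inclusion $\iota\colon\SO(k)\hookrightarrow\SO(d)$ along the derivative map, the $(k-1)$-connectivity of $\iota$ (via the Stiefel manifold $\SO(d)/\SO(k)$) gives surjectivity of $\der_*$ on $\pi_i$ for $i\le k-1$ and split surjectivity one degree lower, and reading this off in the long exact sequence of \eqref{equation:fibresequencediffeogroups} yields both the short exact sequences and their splittings. You have merely spelled out the connectivity bookkeeping that the paper leaves implicit.
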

\begin{proof}
On the subgroup $\SO(k)\subset\SO(d)$, the $\SO(k)$-action on $M$ provides a left-inverse to the derivative map $\der\colon\Diff(M,*)\ra \SO(d)$. As the inclusion $\SO(k)\subset\SO(d)$ is $(k-1)$-connected, we conclude that the derivative map is surjective on homotopy groups in degree $k-1$ and split surjective in lower degrees, which implies the result.
\end{proof}
The action of $SO(d)$ on the standard sphere $S^{d}$ by rotation along an axis satisfies the assumption of the lemma, so the collar twist of $S^d$ is trivial on homotopy groups up to degree $d-1$. In fact, in this case it is even nullhomotopic. Another family of manifolds that admit a smooth action of $\SO(k)$ as in the lemma is given by the $g$-fold connected sums \[W_g=\sharp^g(S^n\times S^n).\] Indeed, by \cite[Prop.\,4.3]{GGRW}, there is a smooth $\SO(n)\times\SO(n)$-action on $W_g$ whose restriction to a factor can be seen to provide an action of $\SO(n)$ as wished. This action has an alternative description, kindly pointed out to us by Jens Reinhold: consider the action of $\SO(n)$ on $S^n\times S^n$ by rotating the first factor around the vertical axis. Both the product of the two upper hemispheres of the two factors and the product of the two lower ones are preserved by the action and are, after smoothing corners, diffeomorphic to a disc $D^{2n}$, acted upon via the inclusion $\SO(n)\subset\SO(2n)$ followed by the standard action of $\SO(2n)$ on $D^{2n}$. Taking the $g$-fold equivariant connected sum of $S^n\times S^n$ using these discs results in an action of $\SO(n)$ on $W_g$ as in Lemma~\ref{lemma:action} and thus has the following as a consequence.

\begin{cor}\label{corollary:collartwistingtrivial}$t_*\colon\pi_i\SO(2n)\ra\pi_i\BDiffuo(W_g,D^{2n})$ is trivial for $i\le n-1$.
\end{cor}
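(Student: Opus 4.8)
The plan is to obtain this as an immediate application of Lemma~\ref{lemma:action} to the smooth $\SO(n)$-action on $W_g$ recalled above, taking $k=n$ and $d=2n$. First I would make the action explicit in coordinates: writing $S^n\subset\mathbb{R}^n\times\mathbb{R}$ with poles $N=(0,1)$ and $S=(0,-1)$, let $\SO(n)$ act on $S^n\times S^n$ by rotating the $\mathbb{R}^n$-coordinate of the first factor and trivially on the second. The products of the two upper hemispheres and of the two lower hemispheres are invariant, they meet the fixed locus $\{N,S\}\times S^n$ only in $(N,N)$ and $(S,S)$ respectively, and after smoothing corners each is identified with a disc $D^{2n}$ on which $\SO(n)$ acts via the block inclusion $\SO(n)\hookrightarrow\SO(2n)$ followed by the standard action of $\SO(2n)$. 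Performing the $g$-fold connected sum along equivariant orientation-reversing identifications of the boundary spheres of such discs yields the required $\SO(n)$-action on $W_g$ for $g\ge1$, while for $g=0$ one instead uses the rotation action of $\SO(2n)$ on $W_0=S^{2n}$.

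Next I would exhibit a fixed point of the type demanded by Lemma~\ref{lemma:action}. For $g\ge1$ the points $(N,S)$ and $(S,N)$ in each summand lie outside the discs removed when forming the connected sum, hence survive as fixed points of the resulting action on $W_g$. At such a point the tangent space splits as $T_NS^n\oplus T_SS^n\cong\mathbb{R}^n\oplus\mathbb{R}^n$ with $\SO(n)$ acting by the standard representation on the first summand (the orthogonal complement of the rotation axis) and trivially on the second; this is precisely the restriction along the block inclusion $\SO(n)\hookrightarrow\SO(2n)$ of the standard representation of $\SO(2n)$, and the action is orientation-preserving since $\SO(n)$ is connected. The case $g=0$ is handled identically using the rotation action (or one may simply note that the collar twist of $S^{2n}$ is already known to be nullhomotopic).

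With these verifications the hypotheses of Lemma~\ref{lemma:action} hold with $k=n$, so the long exact sequence associated with \eqref{equation:fibresequencediffeogroups} splits into short exact sequences for $i\le n-1$; in particular $t_*\colon\pi_i\SO(2n)\to\pi_i\BDiffuo(W_g,D^{2n})$ vanishes in that range. I do not anticipate a genuine obstacle here: once the $\SO(n)$-action is in hand the statement is a formal consequence of Lemma~\ref{lemma:action}, and the only point needing a little care is arranging the equivariant connected sum so that a fixed point with the correct tangential representation persists, which is automatic because the gluing regions can be taken disjoint from $(N,S)$ and $(S,N)$. Alternatively, the existence of a suitable $\SO(n)\times\SO(n)$-action can be quoted directly from \cite[Prop.\,4.3]{GGRW} and restricted to one factor.
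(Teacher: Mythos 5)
Your proposal is correct and follows exactly the same approach as the paper: instantiate Lemma~\ref{lemma:action} with $k=n$ using the $\SO(n)$-action on $W_g$ obtained from the $g$-fold equivariant connected sum of the rotation action on $S^n\times S^n$ along the hemisphere discs (equivalently, by restricting the $\SO(n)\times\SO(n)$-action of \cite[Prop.\,4.3]{GGRW} to one factor), with the fixed point $(N,S)$ supplying the required tangential representation, and handle $g=0$ via the rotation action on $S^{2n}$. One small inaccuracy is immaterial: the disc $D^n_+\times D^n_+$ meets the fixed locus $\{N,S\}\times S^n$ in the $n$-disc $\{N\}\times D^n_+$ rather than in the single point $(N,N)$, but the relevant fact, which you do state correctly, is that $(N,S)$ and $(S,N)$ lie outside both hemisphere discs and therefore persist as fixed points after the equivariant connected sum.
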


\subsection{Detecting the collar twist in bordism}
\label{section:detectingcollartwisting} Recall that a manifold $M$ is called (stably) \emph{$n$-parallelisable} if it admits a tangential $\langle n\rangle$-structure for the $n$-connected cover $\BO\langle n\rangle\ra \BO$. This map factors for $n\ge1$ over $\BSO$ and obstruction theory shows that there is a unique (up to homotopy) equivalence $\BO\langle n\rangle^\perp\simeq \BO\langle n\rangle$ over $\BSO$, so tangential and normal $\langle n\rangle$-structures of a manifold $M$ are naturally equivalent. For oriented manifolds $M$, we require that $\langle n\rangle$-structures on $M$ are compatible with the orientation; that is, they lift the oriented stable tangent bundle $M\ra\BSO$ of $M$. Another application of obstruction theory shows that the map $\BSO\times\BSO\ra\BSO$ classifying the external sum of oriented stable vector bundles is up to homotopy uniquely covered by a map $\BO\langle n\rangle\times\BO\langle n\rangle\ra\BO\langle n\rangle$ turning $\MO\langle n\rangle$ into a homotopy commutative ring spectrum.

In the following, we describe a method to detect the nontriviality of certain collar twists. For this, we restrict our attention to oriented manifolds $M$ that are $(n-1)$-connected, $n$-parallelisable, and $2n$-dimensional. These manifolds have by obstruction theory a unique $\langle n \rangle$-structure $\ell_M\colon M\ra \BO\langle n \rangle$, so they determine a canonical class $[M,\ell_M]$ in the bordism group $\Omega_{2n}^{\langle n\rangle}$. The examples we have in mind are connected sums $W_g\sharp \Sigma$ of $W_g$ with $\Sigma\in\Theta_{2n}$.

\begin{rem}In fact, all $(n-1)$-connected $n$-parallelisable $2n$-manifolds $M$ with positive genus and vanishing signature are of the form $W_g\sharp \Sigma$ for a homotopy sphere $\Sigma$, except possibly those in dimension $4$ and the Kervaire invariant one dimensions (see Section~\ref{section:homotopyspheres}). Indeed, the vanishing of the signature implies that $M$ is stably parallelisable and therefore by the work of Kervaire--Milnor \cite{KervaireMilnor} framed bordant to a homotopy sphere. From this, an application of Kreck's modified surgery \cite[Thm\,C--D]{KreckSurgery} shows the claim.
\end{rem}

For an oriented $(n-1)$-connected $n$-parallelisable $2n$-manifold $M$, define morphisms
\begin{equation}
\label{equation:detectionmorphism}\Phi_i\colon\pi_{i}\BDiffuo(M,D^{2n})\lra\Omega^{\langle n\rangle}_{2n+i}\end{equation} as follows: a class $[\varphi]\in\pi_{i}\BDiffuo(M,D^{2n})$ classifies a smooth fibre bundle \[M\ra E_\varphi\ra S^i,\] together with the choice of a trivialised $D^{2n}$-subbundle $S^i\times D^{2n}\subset E_\varphi$. The latter induces a trivialisation of the normal bundle of $S^i=S^i\times\{0\}$ in $E_\varphi$. Using this, a given $\langle n \rangle$-structure on $E_\varphi$ induces an $\langle n\rangle$-structure on the embedded $S^i$ and conversely, obstruction theory shows that every $\langle n \rangle$-structure on $S^i$ is induced by a unique $\langle n \rangle$-structure on $E_\varphi$ in this manner. We can hence define $\Phi_i$ by sending a homotopy class $[\varphi]$ to the total space $E_\varphi$ of the associated bundle, together with the unique $\langle n\rangle$-structure extending the canonical $\langle n\rangle$-structure on $S^i$ induced by the standard stable framing of $S^i$.

\begin{rem}\label{remark:mapofspaces}The morphisms $\Phi_i$ are induced by a map of spaces\[\BDiffuo(M,D^{2n})\lra\Omega^{\infty}\Sigma^{-2n}\MO\langle n\rangle,\] which results from composing the parametrised Pontryagin--Thom map $\BDiffuo(M,D^{2n})\ra\Omega^{\infty}\MTO(2n)\langle n\rangle$  (see e.g.\,\cite[Thm\,1.2]{GRWstable}) with the $2n$-fold desuspension $\MTO(2n)\langle n\rangle\ra\Sigma^{-2n}\MO\langle n\rangle$ of the stabilisation map \eqref{equation:stabilisationmap} for $\MTO(2n)\langle n\rangle$.\end{rem}

\begin{lem}\label{lemma:diagram}The following diagram is commutative
\begin{center}
\begin{tikzcd}
\pi_i\SO(2n)\arrow[r,"t_*"]\arrow[d, "J",swap]&\pi_i\BDiffuo(M,D^{2n})\arrow[d,"\Phi_i"]\\
\Omega_i^{\langle n\rangle}\arrow[r,"{-\cdot[M,\ell_M]}"]&\Omega_{2n+i}^{\langle n\rangle},
\end{tikzcd}
\end{center} the left vertical map being the $J$-homomorphism followed by the map from framed to $\langle n\rangle$-bordism, and the bottom map the multiplication by $[M,\ell_M]\in\Omega_{2n}^{\langle n\rangle}$.
\end{lem}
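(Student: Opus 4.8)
The plan is to unwind both composites geometrically using the Pontryagin--Thom description of $\Phi_i$ from Remark~\ref{remark:mapofspaces}, and then to identify the resulting closed $(2n+i)$-manifold with $\langle n\rangle$-structure on both sides. Fix a class $\gamma\in\pi_i\SO(2n)$ and let $\tilde\gamma\colon S^i\to\SO(2n)$ be a representing map. Going around the square via $t_*$, the collar twist produces the bundle $M\to E_{t(\gamma)}\to S^i$ which, by the explicit formula for $t$ recalled in Section~2, is the \emph{clutching construction}: it is obtained from two copies of $D^i\times M$ glued along $S^{i-1}\times M$ by a diffeomorphism supported in the collar $[0,1]\times S^{2n-1}$ of $M\setminus\operatorname{int}(D^{2n})$ and given by $(t,x)\mapsto(t,\tilde\gamma(\cdot)(t)x)$ there. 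Equivalently, $E_{t(\gamma)}$ is the fibrewise connected sum, along the trivial $S^i\times D^{2n}$-subbundle, of the trivial bundle $S^i\times M$ with the $S^{2n}$-bundle over $S^i$ classified by $\gamma\in\pi_i\SO(2n)\to\pi_i\SO(2n+1)$ — i.e.\ the sphere bundle $S(\underline{\mathbf{R}}\oplus V_\gamma)$ of the vector bundle $V_\gamma\to S^i$ with clutching function $\gamma$. Under $\Phi_i$ this total space, with its extended $\langle n\rangle$-structure, represents the bordism class $[S^i\times M]+[S(\underline{\mathbf{R}}\oplus V_\gamma)]$ in $\Omega_{2n+i}^{\langle n\rangle}$, the connected-sum correction being bordant to a disjoint union just as in the proof of Lemma~\ref{lemma:zigzag}. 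Since $[S^i\times M,\operatorname{std}]$ maps to $0$ in $\Omega_{2n+i}^{\langle n\rangle}$ (the standard framing of $S^i$ bounds), the first term drops out.

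Next I would identify $[S(\underline{\mathbf{R}}\oplus V_\gamma)]$ with $J(\gamma)\cdot[M,\ell_M]$. Here it is cleaner to do this at the spectrum level, using that $\Phi_i$ is induced by the map of Remark~\ref{remark:mapofspaces}. The collar twist $t\colon\SO(2n)\to\BDiffuo(M,D^{2n})$ is the delooping of $\Omega\SO(2n)\to\Diffuo(M,D^{2n})$, and under the parametrised Pontryagin--Thom map $\BDiffuo(M,D^{2n})\to\Omega^\infty\MTO(2n)\langle n\rangle$ it factors through the map $\SO(2n)\to\Omega^\infty\MTO(2n)\langle n\rangle$ recording the derivative action on the tangent disc bundle of the fibre; after desuspending to $\Sigma^{-2n}\MO\langle n\rangle$ this is precisely the map that classifies multiplication by the Thom class of $M$, so the composite $\Phi_i\circ t_*$ factors as $\pi_i\SO(2n)\xrightarrow{J}\pi_i\bfS\to\pi_i(\Sigma^{-2n}\MO\langle n\rangle)\cdot[M,\ell_M]\!\!\!\!\xrightarrow{\quad}\Omega^{\langle n\rangle}_{2n+i}$, where the first arrow is the stable $J$-homomorphism (the derivative action of $\SO(2n)$ on $\mathbf{R}^{2n}$ is, after one suspension, the framed $J$-homomorphism) and the second is the unit $\pi_i\bfS=\Omega_i^{\fr}\to\Omega_i^{\langle n\rangle}$ followed by multiplication by the ring element $[M,\ell_M]$. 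This is exactly the clockwise composite $\big(-\cdot[M,\ell_M]\big)\circ J$ in the diagram, so the square commutes.

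The main obstacle is the bookkeeping of $\langle n\rangle$-structures in the two descriptions of $\Phi_i\circ t_*$: one must check that the \emph{canonical} $\langle n\rangle$-structure on $E_{t(\gamma)}$ extending the standard framing of the base $S^i$ (the one defining $\Phi_i$) is the same, under the clutching decomposition, as the product $\langle n\rangle$-structure on the $S^i\times M$ summand together with the structure on $S(\underline{\mathbf{R}}\oplus V_\gamma)$ that corresponds to $J(\gamma)\cdot[M,\ell_M]$ under the ring structure on $\MO\langle n\rangle$. Since $M$ is $(n-1)$-connected and $n$-parallelisable, obstruction theory guarantees uniqueness of $\langle n\rangle$-structures on the relevant spaces once they are pinned down on the distinguished disc subbundle and on $S^i$, so this comparison reduces to matching two stable framings of the normal bundle of $S^i\subset E_{t(\gamma)}$, both built from the trivialised $D^{2n}$-subbundle. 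I would carry this out by noting that the clutching function lands in diffeomorphisms supported away from $S^i\times\{0\}$, so the relevant framing of the normal bundle of $S^i$ is literally the standard one on both sides, whence the structures agree. Granting this identification, the result follows from the two bordism computations above.
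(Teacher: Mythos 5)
Your proposal has genuine gaps, and the central geometric step does not hold up.

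\textbf{First approach (fibrewise connected sum).} Your claim that $E_{t(\gamma)}$ is a fibrewise connected sum of $S^i\times M$ with a sphere bundle $S(\underline{\mathbf{R}}\oplus V_\gamma)$ over $S^i$ is not correct, and the conclusion you try to draw from it is inconsistent. The paper's key observation is that $E_{t(\varphi)}$ is already \emph{diffeomorphic} to $S^i\times M$: one undoes the collar twist by fibrewise rotating the disc $D^{2n}$ using the standard $\SO(2n)$-action, which trades the twist in the collar for a twist on the $D^{2n}$-subbundle (a twist that is no longer visible once the constraint of fixing $D^{2n}$ pointwise is dropped). What changes is only the induced $\langle n\rangle$-structure on $S^i\times M$, and it is precisely this structure that carries the whole content of the lemma. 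Your proposed bordism identity $[E_{t(\gamma)}]=[S^i\times M]+[S(\underline{\mathbf{R}}\oplus V_\gamma)]$, with the first term dropping out, would then require $[S(\underline{\mathbf{R}}\oplus V_\gamma)]=J(\gamma)\cdot[M,\ell_M]$; but the underlying manifold on the left does not involve $M$, whereas the right-hand side does (and is non-zero for suitable $M$, e.g.\ $M=W_g\sharp\Sigma$ with $\eta\cdot[\Sigma]\neq 0$ and $i=1$), so this identification cannot be correct as stated. There is also a degree mismatch: the clutching function of a rank-$2n$ bundle $V_\gamma$ over $S^i$ lives in $\pi_{i-1}\SO(2n)$, not $\pi_i\SO(2n)$.

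\textbf{Second approach (spectrum level).} Here you identify the right ingredients — the composite should factor through the stable $J$-homomorphism followed by multiplication by $[M,\ell_M]$ — but the decisive step, namely that the composite $\SO(2n)\to\BDiffuo(M,D^{2n})\to\Omega^\infty\Sigma^{-2n}\MO\langle n\rangle$ ``classifies multiplication by the Thom class of $M$'', is asserted with no justification. That assertion \emph{is} the lemma; unpacking it would require exactly the $\langle n\rangle$-structure bookkeeping that you explicitly flag as the ``main obstacle'' and then defer. The paper handles this directly: the diffeomorphism $E_{t(\varphi)}\cong S^i\times M$ obtained by untwisting identifies the structure on $E_{t(\varphi)}$ (the one extending the canonical structure on $S^i$) with the product of the $\varphi$-twisted stable framing on $S^i$ and $\ell_M$ on $M$; since the $\varphi$-twisted framing on $S^i$ represents $J(\varphi)\in\Omega^{\fr}_i$, multiplicativity of the bordism ring gives $\Phi_i(t_*\varphi)=J(\varphi)\cdot[M,\ell_M]$. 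That is the argument your proof is missing.
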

\begin{proof}
Recall from the beginning of the chapter that the map $t$ is given by twisting a collar $[0,1]\times S^{2n-1}\subset M\backslash\interior{D^{2n}}$. The composition of $t_*$ with $\Phi_i$ maps a class in $\pi_i\SO(2n)$, represented by a smooth map $\varphi\colon S^i\ra\SO(2n)$, to the bordism class of a certain manifold $E_{t(\varphi)}$, equipped with a particular $\langle n\rangle$-structure. The manifold $E_{t(\varphi)}$ is constructed using a clutching function that twists the collar using $\varphi$ and is constant outside of it. Its associated $\langle n\rangle$-structure is the unique one that extends the canonical one on $S^i$ via the given trivialisation of its normal bundle. Untwisting the collar using the standard $\SO(2n)$-action on the disc $D^{2n}$ yields a diffeomorphism $E_{t(\varphi)}\cong S^i\times M$, which coincides with the twist \[\mapnoname{D^{2n}\times S^i}{D^{2n}\times S^i}{(x,t)}{(\varphi(t)x,t)}\] on the canonically embedded $S^i\times D^{2n}$ and is the identity on the other component of the complement of the twisted collar $S^i\widetilde{\times}([0,1]\times S^{2n-1})\subset E_\varphi$. Hence, the $\langle n\rangle$-structure on $S^i\times M$ induced from the one on $E_{t(\varphi)}$ via this diffeomorphism coincides with the product of the twist of the canonical $\langle n\rangle$-structure on $S^i$ by $\varphi$ with the unique one $\ell_M$ on $M$. By the bordism description of the $J$-homomorphism, this implies the claim.
\end{proof}
Lemma~\ref{lemma:diagram} serves us to detect the nontriviality of the collar twist \[t\colon\SO(2n)\lra\BDiffuo(M,D^{2n}).\] Indeed, if there is a nontrivial element in the subgroup $\operatorname{im}(J)_i\cdot [M,\ell_M]$ of $\pi_{2n+i}\MO{\langle n\rangle}$, then Lemma~\ref{lemma:diagram} implies that the collar twist of $M$ is nontrivial on homotopy groups in degree $i$. The $g$-fold connected sum $W_g$ is the boundary of the parallelisable handlebody $\natural^{g}(D^{n+1}\times S^n)$ and is thus trivial in framed bordism and so also in $\langle n\rangle$-bordism, giving \[[W_g\sharp\Sigma,\ell_{W_g\sharp\Sigma}]=[W_g,\ell_{W_g}]+[\Sigma,\ell_{\Sigma}]=[\Sigma,\ell_\Sigma]\] in $\Omega_{2n}^{\langle n\rangle}$ for all $\Sigma\in\Theta_{2n}$. This proves the first part of the following proposition.

\begin{prop}\label{proposition:nontrivialitycondition}Let $\Sigma\in\Theta_{2n}$ be a homotopy sphere and $g\ge0$. If the subgroup \[\operatorname{im}(J)_i\cdot [\Sigma,\ell_\Sigma]\subset\pi_{2n+i}\MO{\langle n\rangle}\] is nontrivial for some $i\ge 1$, then the abelianisation of the morphism \[t_*\colon\pi_i\SO(2n)\lra\pi_i\BDiffuo(W_g\sharp\Sigma,D^{2n})\] is nontrivial. Assuming $2n\ge 6$, the converse holds for $i=1$ and $g\ge2$.
\end{prop}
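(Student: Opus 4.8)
The first assertion is established just before the proposition; recall that it comes from Lemma~\ref{lemma:diagram} together with the observation (Remark~\ref{remark:mapofspaces}) that $\Phi_1$ is induced by a map to an infinite loop space and hence factors through $\oH_1$, so that, writing $\gamma$ for a generator of $\pi_1\SO(2n)\cong\bfZ/2$, the element $t_*(\gamma)$ has image $\Phi_1(t_*(\gamma))=\eta\cdot[\Sigma,\ell_\Sigma]$ in $\pi_{2n+1}\MO\langle n\rangle$ (using $[W_g\sharp\Sigma,\ell_{W_g\sharp\Sigma}]=[\Sigma,\ell_\Sigma]$ and $J(\gamma)=\eta$). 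The content of the converse is the reverse implication, which I would prove contrapositively: assuming $\eta\cdot[\Sigma,\ell_\Sigma]=0$ in $\pi_{2n+1}\MO\langle n\rangle$ — equivalently $\operatorname{im}(J)_1\cdot[\Sigma,\ell_\Sigma]=0$, since $\operatorname{im}(J)_1=\bfZ/2\cdot\eta$ — one must show that $\tau:=t_*(\gamma)$ is a product of commutators in $\pi_0\Diffuo(W_g\sharp\Sigma,D^{2n})$.

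The strategy is to locate $\tau$ inside this explicitly known group. As removing an open disc from $\Sigma$ yields a disc for $2n\ge6$, there is a diffeomorphism $(W_g\sharp\Sigma)\setminus\interior{D^{2n}}\cong W_{g,1}:=W_g\setminus\interior{D^{2n}}$, and under the resulting identification $\pi_0\Diffuo(W_g\sharp\Sigma,D^{2n})\cong\pi_0\Diffuo(W_{g,1},\partial)$ — a finitely generated group computed by Kreck \cite{Kreck} — only the collar twist, not its target, depends on $\Sigma$. Since $\tau$ is supported in a collar of the boundary it acts trivially both on $H_n(W_g)$ and on the quadratic refinement recording normal data, so it lies in the kernel $I$ of the surjection onto the automorphism group $G_g$ of the quadratic module $(H_n(W_g);\lambda,q)$; a second appeal to the collar support shows its image in the quotient $\operatorname{Hom}(H_n(W_g),-)$ of $I$ vanishes as well, so that $\tau$ lies in the central subgroup $\Theta_{2n+1}\subset\pi_0\Diffuo(W_{g,1},\partial)$ of twists supported in an embedded disc. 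Identifying the mapping torus of a disc twist by $\Sigma'\in\Theta_{2n+1}$ with $(W_g\times S^1)\sharp\Sigma'$, one computes that $\Phi_1$ restricts on $\Theta_{2n+1}$ to the canonical homomorphism $\Sigma'\mapsto[\Sigma',\ell_{\Sigma'}]\in\Omega^{\langle n\rangle}_{2n+1}$, while $\Phi_1(\tau)=\eta\cdot[\Sigma,\ell_\Sigma]$ by the previous paragraph. (For $\Sigma=0$ this recovers $\tau=0$, consistently with Corollary~\ref{corollary:collartwistingtrivial}.)

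It now suffices to show that the induced map $\oH_1(\BDiffuo(W_{g,1},\partial))\to\pi_{2n+1}\MO\langle n\rangle$ is injective on the image of the central subgroup $\Theta_{2n+1}$ — equivalently, that a disc twist by $\Sigma'$ is a product of commutators in $\pi_0\Diffuo(W_{g,1},\partial)$ precisely when $[\Sigma',\ell_{\Sigma'}]=0$; the "only if" is automatic, since $\Phi_1$ is a homomorphism to an abelian group. For $g$ in the stable range of Galatius--Randal-Williams \cite{GRWI} one obtains this by identifying $\oH_1(\BDiffuo(W_{g,1},\partial))$ with the first homology of the target infinite loop space, hence with $\pi_1\MTO(2n)\langle n\rangle$, reducing the statement to a computation with the spectra $\MTO(2n)\langle n\rangle$ and $\MO\langle n\rangle$; for the remaining small genera, in particular $g=2$, I would instead run the five-term exact sequence of $1\to I\to\pi_0\Diffuo(W_{g,1},\partial)\to G_g\to1$, which relates the image of $\Theta_{2n+1}$ in the abelianisation to the transgression $\oH_2(G_g)\to\oH_1(I)_{G_g}$, and then compare this transgression with $\ker(\Phi_1|_{\Theta_{2n+1}})$. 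I expect this last comparison to be the main obstacle: it amounts to matching a group-homological description of which homotopy spheres become commutators with a bordism-theoretic invariant, uniformly down to $g=2$ where $G_g$ need not yet be homologically stable, with additional bookkeeping in the parallelisable dimensions $n=3,7$ where $G_g$ is not of the classical symplectic or orthogonal type.
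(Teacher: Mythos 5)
Your treatment of the forward direction matches the paper. For the converse, however, there is a genuine gap at exactly the point you flag yourself: you reduce the claim to showing that a disc twist by $\Sigma'\in\Theta_{2n+1}$ is a product of commutators in $\pi_0\Diffuo(W_{g,1},\partial)$ precisely when $[\Sigma',\ell_{\Sigma'}]=0$, uniformly for all $g\ge 2$, and then concede you do not have an argument for this. The Galatius--Randal-Williams stable range for $\oH_1$ requires $g\ge 5$, and the five-term-sequence comparison you sketch is left as an expectation, not a proof; so the proposal does not actually establish the converse in the stated range $g\ge 2$.

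The paper's proof is both shorter and complete because it does not attempt to locate $\tau:=t_*(\gamma)$ inside the central $\Theta_{2n+1}\subset\pi_0\Diffuo(W_{g,1},\partial)$ at all. Instead it considers the single combined homomorphism
\[\pi_1\BDiffuo(W_g\sharp\Sigma,D^{2n})\longrightarrow\aut(Q_{W_g\sharp\Sigma})\times\Omega^{\langle n\rangle}_{2n+1}\]
given by the action on $\oH_n$ together with $\Phi_1$, and invokes the companion-paper result \cite[Cor.\,2.4, Cor.\,4.3\,(i), Thm 4.4]{Krannich} that this map is an isomorphism on abelianisations for $g\ge2$ and $2n\ge6$ (the footnote notes the argument carries over from $W_g$ to $W_g\sharp\Sigma$). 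Since $\tau$ is supported in a disc it acts trivially on $\oH_n$, so the first coordinate of its image vanishes automatically; the second coordinate is $\eta\cdot[\Sigma,\ell_\Sigma]$ by Lemma~\ref{lemma:diagram}, and the cited abelianisation statement then immediately gives the contrapositive. Your structural observations (that $\tau$ lies in $\Theta_{2n+1}$ by Kreck's extension, and that $\Phi_1$ restricted to $\Theta_{2n+1}$ is $\Sigma'\mapsto[\Sigma',\ell_{\Sigma'}]$) are correct and consistent with Kreck's description, but they are a detour, and without an input of the strength of \cite[Thm 4.4]{Krannich} the argument does not close for small $g$.
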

\begin{proof}We are left to prove the second claim. The group $\pi_1\BDiffuo(W_g\sharp\Sigma,D^{2n})$ is the group of isotopy classes of orientation-preserving diffeomorphisms of $W_g\sharp\Sigma$ that fix the embedded disc $D^{2n}$ pointwise. Letting such diffeomorphisms act on the middle-dimensional homology group $\oH_n(W_g\sharp\Sigma;\bfZ)$ provides a morphism $\pi_1\BDiffuo(W_g\sharp\Sigma,D^{2n})\ra\aut(Q_{W_g\sharp\Sigma})$ to the subgroup $\aut(Q_{W_g\sharp\Sigma})\subset\operatorname{GL}(\oH_n(W_g\sharp\Sigma;\bfZ))$ of automorphisms that preserve Wall's \cite{Wall2n} quadratic form associated to the $(n-1)$-connected $2n$-manifold $W_g\sharp\Sigma$. Together with the morphism \eqref{equation:detectionmorphism} in degree $1$, this combines to a map
\begin{equation} \label{euqation:abelianisation}\pi_1\BDiffuo(W_g\sharp\Sigma,D^{2n})\lra\aut(Q_{W_g\sharp\Sigma})\times \Omega_{2n+1}^{\langle n\rangle},\end{equation} which induces an isomorphism on abelianisations by \cite[Cor.\,2.4, Cor.\,4.3 (i), Thm 4.4]{Krannich}, assuming $g\ge2$ and $2n\ge 6$.\footnote{The results of \cite{Krannich} are stated for $\pi_1\BDiffuo(W_g,D^{2n})$, but the given proof goes through for $\pi_1\BDiffuo(W_g\sharp\Sigma,D^{2n})$ for any homotopy sphere $\Sigma\in\Theta_{2n}$.} The isotopy classes in the image of $\pi_1\SO(2n)\ra\pi_1\BDiffuo(W_g\sharp\Sigma,D^{2n})$ are supported in a disc and hence act trivially on homology. Consequently, the collar twist is for $g\ge 2$ nontrivial on abelianised fundamental groups if and only if it is nontrivial after composition with $\pi_1\BDiffuo(W_g\sharp\Sigma,D^{2n})\ra\Omega^{\langle n\rangle}_{2n+1}$, so the claim is a consequence of Lemma~\ref{lemma:diagram} and the discussion prior to this proposition.\end{proof}

\begin{rem}\label{remark:remainingcases}For $g=1$, the second part of Proposition~\ref{proposition:nontrivialitycondition} remains valid for $n$ even and for $n=3,7$, since \eqref{euqation:abelianisation} still induces the abelianisation in these cases by \cite[Thm 4.4, Lem.\, 3.9]{Krannich}. The cited results moreover show that \eqref{euqation:abelianisation} no longer induces the abelianisation for $g=1$ and $n\neq1,3,7$ odd, but they can be used in combination with \cite[Lem.\,4, Thm 3 c)]{Kreck} to show that the conclusion of the previous proposition is valid in these cases nevertheless, at least when assuming a conjecture of Galatius--Randal-Williams \cite[Conj.\,A]{GRWabelian}. For $g=0$, the second part of the proposition fails in many cases, which can be seen from Proposition~\ref{proposition:trivialcollartwisting} and its proof (cf.\,Remark~\ref{remark:fails}).
\end{rem}

\begin{rem} \label{remark:trivialinhighdegree}
\ 
\begin{enumerate}\item Since the image of the stable $J$-homomorphism is cyclic, the condition in the previous is equivalent to the nonvanishing of a single class, e.g.\,$\eta\cdot [\Sigma,\ell_\Sigma]\in\pi_{2n+1}\MO\langle n\rangle$ for $i=1$ or $\nu\cdot [\Sigma,\ell_\Sigma]\in\pi_{2n+3}\MO\langle n\rangle$ for $i=3$.
\item By obstruction theory, the sphere $S^i$ has a unique $\langle n\rangle$-structure for $i\ge n+1$, so the left vertical morphism in the diagram of Lemma~\ref{lemma:diagram} is trivial in this range. Hence, the morphism $\pi_i\BDiffuo(M,D^{2n})\ra\Omega^{\langle n \rangle}_{2n+i}$ can only detect the possible nontriviality of the collar twist in low degrees relative to the dimension. Another consequence of this uniqueness is that the image $\im(J)_i$ of the $J$-homomorphism is contained in the kernel of the canonical morphism from framed to $\langle n\rangle$-bordism for $i\ge n+1$.
\end{enumerate}
\end{rem}
\begin{rem}\label{remark:obstructions}
\ 
\begin{enumerate}
\item
Combining Lemmas~\ref{lemma:action} and \ref{lemma:diagram}, we see that the subgroups $\operatorname{im}(J)_i\cdot[M]\subset\pi_{2n+i}\MO\langle n\rangle$ obstruct smooth $\SO(k)$-actions on $M$ with a certain fixed point behaviour.
\item
In the case $g=0$, the first part of Proposition~\ref{proposition:nontrivialitycondition} is closely related to a result obtained by Schultz \cite[Thm\,1.2]{Schultz}. In combination with methods developed by Hsiang--Hsiang (see e.g.\,\cite{HsiangHsiang}), Schultz used this to bound the degree of symmetry of certain homotopy spheres $\Sigma$, i.e.\,the maximum of the dimensions of compact Lie groups acting effectively on $\Sigma$.
\end{enumerate}
\end{rem}
In what follows, we attempt to detect nontrivial elements in the subgroup \begin{equation}\label{equation:product}\operatorname{im}(J)_i\cdot [\Sigma,\ell_\Sigma]\subset\pi_{2n+i}\MO{\langle n\rangle}\end{equation} for homotopy spheres $\Sigma\in\Theta_{2n}$ by mapping $\MO\langle n\rangle$ to ring spectra whose rings of homotopy groups are better understood. A natural choice of such spectra are $\MO\langle m\rangle$ for small $m\le n$ via the canonical map $\MO\langle n\rangle\ra\MO\langle m\rangle$. As homotopy spheres are stably parallelisable, the elements $[\Sigma,\ell_\Sigma]$ are in the image of the unit $\bfS\ra\MO\langle n\rangle$. Conversely, outside of the Kervaire invariant one dimensions $2,6,14,30,62$, and possibly $126$, the work of Kervaire--Milnor \cite{KervaireMilnor} implies that all elements in $\pi_*\bfS$ are represented by homotopy spheres (see Section~\ref{section:homotopyspheres}). Since the image of the unit $\bfS\ra\MO\langle 2\rangle=\MSpin$ lies in degrees $8k+1$ and $8k+2$ for $k\ge0$ by \cite[Cor.\,2.7]{AndersonBrownPeterson}, we cannot detect nontrivial elements in \eqref{equation:product} by relying on spin bordism. As a result, since $\MO\langle2\rangle=\MO\langle3\rangle$, the smallest value of $m$ such that $\MO\langle m\rangle$ might possibly detect nontrivial such elements is $4$. Via the string orientation $\MString\ra\tmf$ (see e.g.\,\cite[Ch.\,10]{tmf}), the Thom spectrum $\MO\langle 4\rangle=\MString$ maps to the spectrum $\tmf$ of topological modular forms, whose ring of homotopy groups has been computed completely (see e.g.\,\cite[Ch.\,13]{tmf}). It is concentrated at the primes $2$ and $3$ and has a certain periodicity of degree $192$ at the prime $2$ and of degree $72$ at the prime $3$. The Hurewicz image of $\tmf$, i.e.\,the image of the composition \[\bfS\lra\MSO\langle n\rangle\lra\MString\lra\tmf\] on homotopy groups, is known at the prime 3 \cite[Ch.\,13]{tmf} and determined in work in progress by Behrens--Mahowald--Quigley at the prime $2$. It is known to contain many periodic families of nontrivial products with $\eta\in\pi_1\bfS$ and $\nu\in\pi_3\bfS$ which provide infinite families of homotopy spheres $\Sigma$ for which \eqref{equation:product} is nontrivial. The following proposition carries out this strategy in the lowest dimensions possible.

\begin{prop}\label{proposition:nontrivialproducts}In the following cases, there exists $\Sigma\in\Theta_{2n}$ for which the subgroup $\operatorname{im}(J)_i\cdot [\Sigma,\ell_\Sigma]$ of $\pi_{2n+i}\MO{\langle n\rangle}$ is nontrivial at the prime $p$:
\begin{enumerate}
\item for $p=2$ in degree $i=1$ and all dimensions $2n\equiv 8\Mod{192}$,
\item for $p=2$ in degree $i=3$ and all dimensions $2n\equiv 14\Mod{192}$, and
\item for $p=3$ in degree $i=3$ and all dimensions $2n\equiv 10\Mod{72}$.
\end{enumerate}
Consequently, in the respective cases, the abelianisation of the morphism
\[t_*\colon\pi_i\SO(2n)\lra\pi_i\BDiffuo(W_g\sharp\Sigma,D^{2n})\] is nontrivial at the prime $p$ for all $g\ge0$.
\end{prop}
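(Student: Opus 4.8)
The plan is to detect the subgroups $\operatorname{im}(J)_i\cdot[\Sigma,\ell_\Sigma]\subset\pi_{2n+i}\MO\langle n\rangle$ by pushing them forward to the spectrum $\tmf$ along the composite $\MO\langle n\rangle\ra\MO\langle 4\rangle=\MString\ra\tmf$ (note that $n\ge 4$ in all three cases, and $\MO\langle n\rangle=\MString$ in the minimal dimensions $2n=8,14,10$, which correspond to $n=4,7,5$) and then reading off the relevant products in the known homotopy of $\tmf$. Since $\operatorname{im}(J)_i$ is cyclic, generated by $\eta$ for $i=1$ and by $\nu$ for $i=3$, it suffices in each case to produce a homotopy sphere $\Sigma$ with $\eta\cdot[\Sigma,\ell_\Sigma]\neq 0$ (case (i)), respectively $\nu\cdot[\Sigma,\ell_\Sigma]\neq 0$ (cases (ii) and (iii)), in $\pi_{2n+i}\MString$, which we shall verify by mapping to $\tmf_{(p)}$. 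As homotopy spheres are stably parallelisable, $[\Sigma,\ell_\Sigma]$ is the image under the unit of a stable framing class of $\Sigma$ in $\pi_{2n}\bfS$; by Remark~\ref{remark:trivialinhighdegree}(ii) (applicable since $2n\ge n+1$) the subgroup $\operatorname{im}(J)_{2n}$ already dies in $\pi_{2n}\MO\langle n\rangle$, so $[\Sigma,\ell_\Sigma]$, and hence its image $x_\Sigma\in\pi_{2n}\tmf$, depends only on the class $[\Sigma]\in\Coker(J)_{2n}$, and $\eta\cdot[\Sigma,\ell_\Sigma]$ and $\nu\cdot[\Sigma,\ell_\Sigma]$ push forward to $\eta\,x_\Sigma$ and $\nu\,x_\Sigma$. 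By the Kervaire--Milnor exact sequence of Section~\ref{section:homotopyspheres}, the image of $\Theta_{2n}\ra\Coker(J)_{2n}$ is $\ker(\Kerv_{2n})$; since $8$ and $10$ are not of the form $2^k-2$ we have $\Kerv_{2n}=0$ for $2n=8,10$, while for $2n=14$ it will matter that the class we use lies in $\ker(\Kerv_{14})$.

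It then remains to treat the minimal dimensions. For $2n=10$ at the prime $3$ one has $\Theta_{10}\cong\bfZ/6\cong\Coker(J)_{10}=\pi_{10}\bfS$, and the order-three element maps to the first $\beta$-element $\beta_1\in\pi_{10}\tmf_{(3)}$; since $\nu$ generates $\pi_3\bfS_{(3)}=\langle\alpha_1\rangle$ and $\alpha_1\beta_1\neq 0$ in $\pi_{13}\tmf_{(3)}$ (from the computation of $\pi_*\tmf_{(3)}$ and its Hurewicz image, \cite[Ch.\,13]{tmf}), this settles case (iii) in dimension $10$. For $2n=8$ at the prime $2$ one has $\Theta_8\cong\bfZ/2\cong\Coker(J)_8$, and the nontrivial homotopy $8$-sphere maps to $\epsilon\in\pi_8\tmf_{(2)}$ with $\eta\epsilon\neq 0$ in $\pi_9\tmf_{(2)}$. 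For $2n=14$ at the prime $2$ the injection $\Theta_{14}\cong\bfZ/2\hookrightarrow\Coker(J)_{14}=\pi_{14}\bfS$ has image $\langle\kappa\rangle=\ker(\Kerv_{14})$, and $\nu\kappa$ is nonzero in $\pi_{17}\tmf_{(2)}$; both nonvanishing statements are read off the chart of $\pi_*\tmf_{(2)}$. In all three cases the resulting product in $\pi_{2n+i}\MString$ is nonzero because it maps to a nonzero class in $\pi_{2n+i}\tmf_{(p)}$.

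To promote these base cases to all of the asserted congruence classes, I would use the $192$-periodicity of $\tmf_{(2)}$ and the $72$-periodicity of $\tmf_{(3)}$: multiplication by the periodicity class is an isomorphism on $\pi_*\tmf_{(p)}$ in large degrees that commutes with multiplication by $\eta$ and $\nu$ and carries the periodic families in the Hurewicz image of $\tmf$ into themselves, so the products above recur in all degrees $2n\equiv 8,14\pmod{192}$ at $p=2$ and $2n\equiv 10\pmod{72}$ at $p=3$; furthermore none of $8+192k$, $14+192k$ with $k\ge 1$, or $10+72k$ is of the form $2^j-2$ (an elementary congruence check modulo a power of $2$), so the relevant classes remain realisable by homotopy spheres in those dimensions, using again that $\Theta_{2n}\to\Coker(J)_{2n}$ is onto when $\Kerv_{2n}=0$. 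This proves the first assertion. The ``consequently'' part is then immediate from the first half of Proposition~\ref{proposition:nontrivialitycondition} applied to $[\Sigma,\ell_\Sigma]$, and since the nontriviality exhibited is witnessed by a $p$-primary class, the abelianised collar twist is nontrivial already at the prime $p$.

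The main obstacle is the input used in the second paragraph at the prime $2$: it relies on the precise structure of the $2$-primary Hurewicz image of $\tmf$ — the work in progress referred to in the discussion preceding the proposition — together with the nonvanishing of $\eta\epsilon$ in degree $9$ and of $\nu\kappa$ in degree $17$ of $\pi_*\tmf_{(2)}$, and on the $192$-periodicity of the families carrying these classes. By contrast, the prime $3$ case is clean because $\pi_*\tmf_{(3)}$ and its Hurewicz image are completely known.
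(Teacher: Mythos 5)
Your proposal is correct and follows essentially the same approach as the paper's proof: both detect the products by pushing forward along $\MO\langle n\rangle\to\MString\to\tmf$, use $\varepsilon\in\pi_8\bfS$, $\kappa\in\pi_{14}\bfS$, and $\beta_1\in\pi_{10}\bfS$ as generators of $\Coker(J)$ together with their $192$- and $72$-periodic families (detected via the Hurewicz image of $\tmf$), and invoke Kervaire--Milnor realisability of $\Coker(J)$-elements by homotopy spheres. Your writeup is slightly more explicit than the paper's---notably in tracking the $\Kerv$-condition and checking that none of $8+192k$, $14+192k$ ($k\ge1$), $10+72k$ is of the form $2^j-2$---but the substance and the inputs (including the reliance on the $2$-primary $\tmf$-Hurewicz image) are the same.
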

\begin{proof}During the proof, we make use of various well-known elements in $\pi_*\bfS$, referring to \cite{Ravenel} for their definition (c.f.\,Thm 1.1.14 loc.\,cit.). From the discussion above, we conclude in particular that all elements in $\pi_{2n}\bfS$ for $2n$ as in one of the three cases of the claim can be represented by homotopy spheres. The elements $\varepsilon=\overline{\nu}-\eta\sigma\in\pi_8\bfS$ and $\kappa\in\pi_{14}\bfS$, respectively, generate $\Coker(J)$ in these dimensions and give rise to $192$-periodic families in $\pi_*\bfS$ at the prime $2$ whose elements are nontrivial when multiplied with $\eta\in\operatorname{im}(J)_1$ and $\nu\in\operatorname{im}(J)_3$, respectively (see e.g.\,\cite[Ch.\,11]{HopkinsMahowald}). All of these products are detected in $\tmf$ and hence are also nontrivial in $\MO\langle n\rangle$. This proves the first and the second case. To prove the third one, we make use of a $72$-periodic family in $\pi_*\bfS$ at the prime $3$ generated by the generator $\beta_1\in\pi_{10}\bfS$ of the $3$-torsion of $\Coker(J)_{10}$. The products of the elements in this family with $\nu\in\pi_3\bfS$ are nontrivial and are detected in $\tmf$ (see e.g.\,\cite[Ch.\,13]{tmf}). This proves the first part of the proposition. The second part is implied by the first by means of Proposition~\ref{proposition:nontrivialitycondition}.
\end{proof}
From work of Kreck \cite{Kreck}, we derive the existence of homotopy spheres $\Sigma$ for which the map $t_*\colon\pi_1\SO(2n)\ra\pi_1\BDiffuo(W_g\sharp\Sigma,D^{2n})$ induced by the collar twist is nontrivial, but becomes trivial upon abelianisation.

\begin{prop}\label{proposition:trivialcollartwisting}For all $k\ge1$, there are $\Sigma\in\Theta_{8k+2}$ for which \[t_*\colon\pi_1\SO(8k+2)\lra\pi_1\BDiffuo(W_g\sharp\Sigma,D^{8k+2})\] is nontrivial for $g\ge0$, but vanishes in the abelianisation for $g\ge2$.
\end{prop}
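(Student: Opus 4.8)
The plan is to reduce the statement, via Proposition~\ref{proposition:nontrivialitycondition}, to the existence of a homotopy sphere with one stable‑homotopy‑theoretic property and one diffeomorphism‑group‑theoretic property, and to establish the latter by reading off Kreck's computations of mapping class groups of highly connected manifolds from the homotopy fibre sequence \eqref{equation:fibresequencediffeogroups}.

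Write $n=4k+1$. By the second part of Proposition~\ref{proposition:nontrivialitycondition}, for $g\ge2$ the abelianisation of the collar twist is trivial exactly when $\eta\cdot[\Sigma,\ell_\Sigma]=0$ in $\pi_{8k+3}\MO\langle n\rangle$. Hence it is enough to produce, for each $k\ge1$, a homotopy sphere $\Sigma\in\Theta_{8k+2}$ such that (i) $\eta\cdot[\Sigma,\ell_\Sigma]=0$ in $\pi_{8k+3}\MO\langle n\rangle$, and (ii) $t_*\colon\pi_1\SO(8k+2)\ra\pi_1\BDiffuo(W_g\sharp\Sigma,D^{8k+2})$ is nontrivial for all $g\ge0$.

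For (ii) I would use that the long exact sequence of \eqref{equation:fibresequencediffeogroups} identifies $\operatorname{im}(t_*)$ with the kernel of the forgetful homomorphism $\pi_0\Diffuo(M,D^{2n})\ra\pi_0\Diff(M)$, a subgroup of $\pi_1\SO(2n)=\bfZ/2$; so $t_*$ is nontrivial precisely when this homomorphism fails to be injective. Both $\pi_0\Diffuo(W_g\sharp\Sigma,D^{2n})$ and $\pi_0\Diff(W_g\sharp\Sigma)$ are computed by Kreck \cite[Thm\,2, Thm\,3, Lem.\,4]{Kreck} --- the former being the principal object of his paper --- in terms of the quadratic form of the $(n-1)$‑connected $2n$‑manifold $W_g\sharp\Sigma$ (a datum independent of $\Sigma$), its automorphism group, a tangential $\operatorname{Hom}$‑term, and the group $\Theta_{2n+1}$ of homotopy $(2n+1)$‑spheres modulo a subgroup that depends on the $\langle n\rangle$‑bordism class $[W_g\sharp\Sigma,\ell_{W_g\sharp\Sigma}]=[\Sigma,\ell_\Sigma]\in\Omega_{2n}^{\langle n\rangle}$. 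Comparing the two computations, the $\bfZ/2$–discrepancy between them --- equivalently, the failure of $t_*$ to be trivial --- is governed by the $\Sigma$‑dependent part of this last term, which one checks is detected by a class $\eta\cdot[\Sigma]\in\Coker(J)_{2n+1}$ lifting $\eta\cdot[\Sigma,\ell_\Sigma]\in\pi_{2n+1}\MO\langle n\rangle$ along the map from framed to $\langle n\rangle$‑bordism. For $\Sigma=0$ this class vanishes, consistently with the triviality of the collar twist of $W_g$ (Corollary~\ref{corollary:collartwistingtrivial}); but for a suitable $\Sigma$ it does not, even when $\eta\cdot[\Sigma,\ell_\Sigma]$ is zero, and then $t_*$ is nontrivial. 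Since the relevant classes and Kreck's computations are compatible with handle stabilisation, this holds for all $g\ge0$ at once.

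It remains to find, for each $k\ge1$, a homotopy sphere $\Sigma\in\Theta_{8k+2}$ with $\eta\cdot[\Sigma]$ nonzero in $\Coker(J)_{8k+3}$ but mapping to zero in $\pi_{8k+3}\MO\langle n\rangle$; since for $k\ge1$ the dimension $8k+2$ is not a Kervaire invariant one dimension, every class in $\Coker(J)_{8k+2}$ arises from such a $\Sigma$ (see Section~\ref{section:homotopyspheres}). As $\operatorname{im}(J)_{8k+3}$ already dies in $\langle n\rangle$‑bordism (Remark~\ref{remark:trivialinhighdegree}), the relevant morphism factors through $\Coker(J)_{8k+3}$, so what one needs is a class of $\Coker(J)_{8k+2}$ whose $\eta$‑multiple lies in the kernel of $\Coker(J)_{8k+3}\ra\pi_{8k+3}\MO\langle n\rangle$ without vanishing --- a computation in $2$‑local stable homotopy theory analogous to, but independent of, that in the proof of Proposition~\ref{proposition:nontrivialproducts}, and it is here that the specific form of the dimension, $8k+2$ with $k\ge1$, enters. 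The main obstacle is the third paragraph: extracting from Kreck's exact sequences the precise $\bfZ/2$–discrepancy between $\pi_0\Diffuo(W_g\sharp\Sigma,D^{2n})$ and $\pi_0\Diff(W_g\sharp\Sigma)$ and identifying its vanishing locus with that of $\eta\cdot[\Sigma]$ in $\Coker(J)_{2n+1}$ --- i.e.\ showing that the collar twist detects $[\Sigma]$ at the level of framed rather than $\langle n\rangle$‑bordism, which is exactly what makes conditions (i) and (ii) simultaneously satisfiable.
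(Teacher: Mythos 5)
Your high-level strategy is the right one, and your reduction is correct: by the second part of Proposition~\ref{proposition:nontrivialitycondition} it suffices (for $g\ge 2$) to find $\Sigma\in\Theta_{8k+2}$ with $\eta\cdot[\Sigma,\ell_\Sigma]=0$ in $\pi_{8k+3}\MO\langle 4k+1\rangle$ while $t_*$ itself is nontrivial, and the nontriviality of $t_*$ should be extracted from Kreck via the fibre sequence \eqref{equation:fibresequencediffeogroups}. The gap is in where you locate the obstruction detecting $t_*$. You assert that the relevant invariant lives in $\Coker(J)_{2n+1}$, and accordingly you propose to look for a class in $\Coker(J)_{8k+2}$ whose $\eta$-multiple is nonzero in $\Coker(J)_{8k+3}$ but dies in $\pi_{8k+3}\MO\langle n\rangle$. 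That is the wrong target. Kreck's criterion (his Lemma~4 and Theorem~3\,c)) detects nontriviality of $t_*$ by a homotopy sphere in $\Theta_{2n+1}$, equivalently by a class in $\pi_{2n+1}\bfS$ --- \emph{not} in $\Coker(J)_{2n+1}$ --- and in the actual examples this class lies in $\im(J)_{2n+1}$, hence is already zero in $\Coker(J)$. Concretely, the paper takes $\Sigma_\mu\in\Theta_{8k+2}$ corresponding to Adams' element $\mu_{8k+2}$; Kreck's results (together with $\im(J)_{8k+2}=0$, so $\Sigma_\mu$ is uniquely framed, and the fact that $W_g$ bounds a parallelisable handlebody to remove the $g$-dependence) reduce nontriviality of $t_*$ exactly to $\eta\cdot\mu_{8k+2}\neq 0$ in $\pi_{8k+3}\bfS$. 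This product is a nonzero element of order $2$ lying in $\im(J)_{8k+3}$. Being in $\im(J)$ in a degree $\ge n+1$ is precisely what forces $\eta\cdot[\Sigma_\mu,\ell_{\Sigma_\mu}]=0$ in $\pi_{8k+3}\MO\langle n\rangle$ (Remark~\ref{remark:trivialinhighdegree}\,(ii)), so conditions (i) and (ii) are met \emph{because} the class dies in $\Coker(J)$, not in spite of it.

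So your ``it remains to find'' step sets up a search that would exclude the very family that works, and you never produce an explicit $\Sigma$. You also flag your third paragraph (extracting the discrepancy from Kreck) as the main obstacle without carrying it out; that is where the $\Coker(J)$-vs-$\pi_*\bfS$ distinction must be made correctly. The fix is to replace ``detected in $\Coker(J)_{2n+1}$'' with ``detected in $\Theta_{2n+1}$ via Kreck, equivalently in $\pi_{2n+1}\bfS$ after choosing a framing'', to observe that $\im(J)$ supplies classes nonzero in $\pi_*\bfS$ yet trivial in $\MO\langle n\rangle$, and to realise $\eta\cdot\mu_{8k+2}\in\im(J)_{8k+3}$ as the concrete infinite family (with $\Sigma=\Sigma_\mu$), as the paper does.
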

\begin{proof}The Kervaire--Milnor morphism $\Theta_{d}\ra\Coker(J)_d$ is an isomorphism in dimensions of the form $d=8k+2$ (see Section~\ref{section:homotopyspheres}), so there is a unique homotopy sphere $\Sigma_\mu\in\Theta_{8k+2}$ corresponding to the class of Adams' nontrivial element $\mu_{8k+2}\in\pi_{8k+2}\bfS$ (see \cite[Thm\,1.8]{Adams}). By \cite[Lem.\,4]{Kreck}, the morphism $t_*$ is trivial if and only if a certain homotopy sphere $\Sigma_{W_g\sharp\Sigma_\mu}\in\Theta_{8k+3}$ vanishes. Combining \cite[Lem.\,3 a), b)]{Kreck} with the fact that $W_g$ bounds the parallelisable handlebody $\natural^g(D^{4k+2}\times S^{4k+1})$, it follows that $\Sigma_{W_g\sharp\Sigma_\mu}$ agrees with $\Sigma_{W_0\sharp\Sigma_\mu}=\Sigma_{\Sigma_\mu}$. As $\im(J)_{8k+2}=0$ by Bott periodicity, the homotopy sphere $\Sigma_\mu$ has a unique stable framing, so an application of \cite[Thm 3 c)]{Kreck} shows that $\Sigma_{\Sigma_\mu}$ is trivial if and only if $\eta\cdot \mu_{8k+2}\in\pi_{8k+3}\bfS$ vanishes. But this product is a nontrivial element of order $2$ in the image of $J$ (see e.g.\,\cite[Thm\,5.3.7]{Ravenel}), so the first part follows. To prove the second, note that since $\eta\cdot \mu_{8k+2}\in\im(J)_{8k+3}$, the class $\eta\cdot[\Sigma_\mu,\ell_{\Sigma_\mu}]\in\pi_{8k+3}\MO\langle 4k+1\rangle$ is trivial (see Remark~\ref{remark:trivialinhighdegree} ii)), which implies the claim by the second part of Proposition~\ref{proposition:nontrivialitycondition}.
\end{proof}

\begin{rem}\label{remark:fails}\ 
\begin{enumerate}
\item For $g=1$, the conclusion of the second part of Proposition~\ref{proposition:trivialcollartwisting} still holds in most cases (see Remark~\ref{remark:remainingcases}), but has to fail for $g=0$ since $\pi_1\BDiffuo(\Sigma,D^{2n})$ is abelian. 
\item Recall that $\pi_1\BDiffuo(W_g\sharp\Sigma,D^{2n})$ is the group of isotopy classes of diffeomorphisms of $W_g\sharp\Sigma$ that fix an embedded disc $D^{2n}$ pointwise. The image of $t_*\colon\pi_1\SO(2n)\ra\pi_1\BDiffuo(W_g\sharp\Sigma,D^{2n})$ is generated by a single diffeomorphism $t(\eta)$, which is the higher-dimensional analogue of a Dehn twist for surfaces. It twists a collar around $D^{2n}$ by a generator in $\pi_1\SO(2n)$ and is the identity elsewhere. Consequently, the morphism $t_*$ is trivial if and only if $t(\eta)$ is isotopic to the identity, and it is trivial in the abelianisation if and only $t(\eta)$ is isotopic to a product of commutators.
\end{enumerate}
\end{rem}

\section{Detecting exotic smooth structures in diffeomorphism groups}
Evaluating diffeomorphisms of a closed oriented $d$-manifold $M$ at a basepoint $*\in M$ induces a homotopy fibre sequence
\begin{equation}\label{equation:evalutionsequence}M\lra\BDiff(M,*)\lra\BDiff(M),\end{equation} from which we see that for highly connected $M$ (such as $W_g\sharp\Sigma$), the map $\BDiff(M,*)\ra\BDiff(M)$ is also highly connected. Studying the space $\BDiff(M,*)$ instead of $\BDiff(M)$ is advantageous as it fits into the homotopy fibre sequence \eqref{equation:fibresequencediffeogroups} of the form \[\BDiffuo(M,D^{d})\lra\BDiff(M,*)\lra\BSO(d)\] whose base space is simply connected. Evidently, the rightmost space in this sequence is not affected by replacing $M$ by $M\sharp\Sigma$ for homotopy spheres $\Sigma$, and the following lemma shows that the same holds for the leftmost space as well.

\begin{lem}\label{lemma:relativediffisomorphic}For an oriented manifold $M$ of dimension $d\ge5$ and a homotopy sphere $\Sigma$ in $\Theta_{d}$, there is an isomorphism of topological groups \[\Diffuo(M,D^{d})\cong\Diffuo(M\sharp \Sigma,D^{d}).\]
\end{lem}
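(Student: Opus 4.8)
The plan is to produce the isomorphism by restricting diffeomorphisms to the complement of the fixed disc, after arranging the connected sum so that the homotopy sphere is absorbed entirely into that disc. The first ingredient is a formal observation: for an oriented $d$-manifold $N$ with an embedded disc $D^{d}\subset N$ (taken in the interior of $N$), restriction to $W_N\coloneqq N\setminus\interior{D^{d}}$ defines an isomorphism of topological groups from $\Diffuo(N,D^{d})$ onto the group $\mathcal D(W_N)$ of diffeomorphisms of $W_N$ whose $\infty$-jet along the sphere $S^{d-1}=\partial\interior{D^{d}}$ agrees with that of the identity. Indeed, the inverse extends such a diffeomorphism by the identity over $D^{d}$, which is smooth precisely because of the jet condition; and a diffeomorphism of $N$ that is the identity on the open disc $\interior{D^{d}}$ automatically has trivial $\infty$-jet along $S^{d-1}$ by continuity of all its partial derivatives, so its restriction does land in $\mathcal D(W_N)$. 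In particular the isomorphism type of $\Diffuo(N,D^{d})$ depends only on $W_N$ together with the germ of a collar of $\partial W_N$, and — by uniqueness of embedded discs in a connected manifold up to ambient isotopy — not on the choice of $D^{d}$.

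Next I would arrange the connected sum so that the two complements agree on the nose. Fix $D^{d}\subset M$ and choose a smaller embedded disc $D'\subset\interior{D^{d}}$; form $M\sharp\Sigma$ by excising $\interior{D'}$ and gluing in $\Sigma\setminus\interior{D''}$ along the new boundary sphere, for some embedded disc $D''\subset\Sigma$. Since $M\setminus\interior{D'}=\bigl(M\setminus\interior{D^{d}}\bigr)\cup_{S^{d-1}}\bigl(D^{d}\setminus\interior{D'}\bigr)$, this exhibits $M\sharp\Sigma=W\cup_{S^{d-1}}\widehat D$, where $W\coloneqq M\setminus\interior{D^{d}}$ and $\widehat D\coloneqq\bigl(D^{d}\setminus\interior{D'}\bigr)\cup\bigl(\Sigma\setminus\interior{D''}\bigr)$ (corner smoothed), the gluing of $W$ and $\widehat D$ being along $\partial D^{d}$. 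The collar $D^{d}\setminus\interior{D'}$ deformation retracts onto $\partial D'$, so $\widehat D\simeq\Sigma\setminus\interior{D''}\simeq\ast$; thus $\widehat D$ is a compact contractible $d$-manifold with boundary $S^{d-1}$. Removing a small open ball from its interior turns $\widehat D$ into an $h$-cobordism from $S^{d-1}$ to $S^{d-1}$, so $\widehat D\cong D^{d}$ for $d\ge 6$ by the $h$-cobordism theorem, while for $d=5$ there is nothing to prove because $\Theta_{5}=0$ \cite{KervaireMilnor}. Hence $M\sharp\Sigma$ is obtained by capping off the same manifold $W$ with a disc, exactly as $M=W\cup_{S^{d-1}}D^{d}$ is.

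Combining the two steps and taking the embedded disc of $M\sharp\Sigma$ to be $\widehat D$ (permissible by the independence statement above), restriction to $W$ identifies both $\Diffuo(M,D^{d})$ and $\Diffuo(M\sharp\Sigma,\widehat D)$ with $\mathcal D(W)$, and composing these two isomorphisms yields the one asserted in the lemma. I do not anticipate a serious difficulty here: the one piece of genuine geometry is the identification $\widehat D\cong D^{d}$ — that a homotopy $d$-disc is standard — which is where the hypothesis $d\ge 5$ is used; the jet bookkeeping of the first step and the independence of $\Diffuo(-,D^{d})$ of the chosen disc are routine, but should be spelled out with some care in order to obtain an honest \emph{isomorphism} of topological groups rather than merely a homotopy equivalence.
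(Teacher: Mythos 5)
Your proposal is correct and takes essentially the same approach as the paper: identify $\Diffuo(N,D^d)$ with the group of diffeomorphisms of $N\setminus\interior{D^d}$ that extend over the disc by the identity (equivalently, have trivial $\infty$-jet along the boundary sphere), and exploit that $M\sharp\Sigma\setminus\interior{D^d}\cong M\setminus\interior{D^d}$ for $d\ge5$. The paper simply cites the fact that manifolds with nonempty boundary in dimension $\ge5$ are insensitive to connected sum with a homotopy sphere and conjugates by the resulting diffeomorphism, whereas you re-derive that fact from the $h$-cobordism theorem (handling $d=5$ separately via $\Theta_5=0$) and arrange the complements to agree on the nose before invoking disc uniqueness; these differences are cosmetic.
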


\begin{proof}
The group $\Diffuo(M\sharp \Sigma,D^{d})$ can be described equivalently as the group of diffeomorphisms of $M\sharp \Sigma\backslash \interior{D^{d}}$ that extend over the disc $D^{d}\subset M\sharp\Sigma$ by the identity. Manifolds of dimension $d\ge 5$ with nonempty boundary are insensitive to taking the connected sum with a homotopy sphere, so there is a diffeomorphism  $M\sharp \Sigma\backslash{\interior{D^{d}}}\cong M\backslash \interior{D^{d}}$. This diffeomorphism does not necessarily preserve the boundary, but conjugation with it nevertheless induces an isomorphism as claimed. \end{proof}

\begin{rem}By Lemma~\ref{lemma:relativediffisomorphic}, the total space of the homotopy fibre sequence \[\Fr(M)\lra\BDiffuo(M,D^{d})\lra\BDiff(M)\] involving the frame bundle $\Fr(M)$ does not change when replacing $M$ by $M\sharp\Sigma$ for homotopy spheres $\Sigma$. Somewhat surprisingly, the same holds for $\Fr(M)$ (see e.g.\,the proof of \cite[Thm\,1.2]{CrowleyZvengrowski}), but we shall not make use of this fact.
\end{rem}
\begin{proof}[Proof of Theorem~\ref{theorem:H1} and Corollary~\ref{corollary:H1}]Using the fibrations  \eqref{equation:fibresequencediffeogroups} and \eqref{equation:evalutionsequence}, we obtain an exact sequence of the form
\[\bfZ/2\cong\pi_1\SO(2n)\xlra{t_*}\pi_1\BDiffuo(W_g\sharp\Sigma,D^{2n})\xlra{\iota_*}\pi_1\BDiff(W_g\sharp \Sigma)\lra0.\] The first map is trivial in the case of the standard sphere $\Sigma=S^{2n}$ by Corollary~\ref{corollary:collartwistingtrivial}, a fact which, combined with Lemma~\ref{lemma:relativediffisomorphic}, gives isomorphisms \[\pi_1\BDiffuo(W_g\sharp\Sigma,D^{2n})\cong \pi_1\BDiffuo(W_g,D^{2n})\cong \pi_1\BDiff(W_g)\] whose composition we denote by $\psi$. We arrive at an exact sequence
\[\bfZ/2\xlra{\psi t_*}\pi_1\BDiffuo(W_g)\xlra{\psi^{-1}\iota_*}\pi_1\BDiff(W_g\sharp \Sigma)\lra0,\] whose first map agrees up to isomorphism with the collar twist of $W_g\sharp\Sigma$ on fundamental groups. Abelianising this exact sequence implies Theorem~\ref{theorem:H1} by virtue of Proposition~\ref{proposition:nontrivialitycondition}. From work of Sullivan \cite[Thm\,13.3]{Sullivan}, we know that the mapping class group $\pi_0\Diff(W_g)$ is commensurable to an arithmetic group and hence finitely generated and residually finite. By a classical result of Malcev \cite{Malcev}, this implies that it is Hopfian; that is, every surjective endomorphism is an isomorphism. Therefore, $\pi_1\BDiffuo(W_g)$ and $\pi_1\BDiffuo(W_g\sharp\Sigma)$ are isomorphic if and only if $\pi_1\SO(2n)\ra\pi_1\BDiffuo(W_g\sharp\Sigma,D^{2n})$ is trivial, and the two respective abelianisations are isomorphic if and only if this morphism is trivial after abelianisation. Thus, combining Proposition~\ref{proposition:nontrivialproducts} and \ref{proposition:trivialcollartwisting} proves Corollary~\ref{corollary:H1}.
\end{proof}

\begin{rem}\label{remark:theoremholds}The preceding proof also shows that the final parts of Theorem~\ref{theorem:H1} and Corollary~\ref{corollary:H1} are valid for $g=0$ and $g=1$ if Propositions~\ref{proposition:nontrivialitycondition} and~\ref{proposition:trivialcollartwisting} hold in these cases. This is the case for $g=1$ and most $n$, but fails for $g=0$, as discussed in Remarks~\ref{remark:remainingcases} and~\ref{remark:fails}.
\end{rem}

\begin{rem}It follows from Corollary~\ref{corollary:H1} that there are infinitely many dimensions in which the inertia group $I(W_g)$ of $W_g$ is a proper subgroup of $\Theta_{2n}$. Results of Kosinski \cite[Thm\,3.1]{Kosinski} and Wall \cite{WallInertia} show that $I(W_g)$ is in fact trivial in all dimensions.\end{rem}

\begin{proof}[Proof of Theorem~\ref{theorem:H3}]Since $\Theta_6$ vanishes by \cite{KervaireMilnor}, we may assume $2n\ge 8$. Employing the universal coefficient theorem and the fact that $\BSO(2n)$ has the homotopy type of a CW-complex of finite type, the well-known calculation of $\oH^*(\BSO(2n);\bfZ[\frac{1}{2}])$ (see e.g.\,\cite[p.\,87]{Stong}) implies that $\oH_*(\BSO(2n);\bfZ[\frac{1}{2}])$ vanishes in degrees $*\le3$ and is isomorphic to $\bfZ[\frac{1}{2}]$ in degree $4$. From \eqref{equation:evalutionsequence} and the Serre exact sequence with $\bfZ[\frac{1}{2}]$-coefficients of the homotopy fibre sequence \eqref{equation:fibresequencediffeogroups}, one sees that the morphism \[\textstyle{\oH_*(\BDiffuo(W_g\sharp\Sigma,D^{2n});\bfZ[\frac{1}{2}])\lra\oH_*(\BDiff(W_g\sharp\Sigma);\bfZ[\frac{1}{2}])}\] is an isomorphism in degrees $*\le2$ for all $\Sigma\in\Theta_{2n}$, in particular for the standard sphere. This implies the first part of the theorem, using that the groups $\oH_*(\BDiffuo(W_g\sharp\Sigma,D^{2n});\bfZ[\frac{1}{2}])$ and $\oH_*(\BDiffuo(W_g,D^{2n});\bfZ[\frac{1}{2}])$ are isomorphic by Lemma~\ref{lemma:relativediffisomorphic}. To show the second, we map the long exact sequence on homotopy groups of \eqref{equation:fibresequencediffeogroups} to the Serre exact sequence just considered. This yields a commutative diagram with exact rows
\begin{center}
\begin{tikzcd}[row sep=0.5cm, column sep=0.434cm]
\bfZ[\frac{1}{2}]\arrow[r,"t_*"]\arrow[d,"\operatorname{id}",swap]&\pi_3\BDiffuo(W_g\sharp\Sigma,D^{2n})\otimes\bfZ[\frac{1}{2}]\arrow[r]\arrow[d]&\pi_3\BDiff(W_g\sharp\Sigma)\otimes\bfZ[\frac{1}{2}]\arrow[r]\arrow[d]&0\\
\bfZ[\frac{1}{2}]\arrow[r]&\oH_3(\BDiffuo(W_g\sharp\Sigma,D^{2n});\bfZ[\frac{1}{2}])\arrow[r]&\oH_3(\BDiff(W_g\sharp\Sigma);\bfZ[\frac{1}{2}])\arrow[r]&0,
\end{tikzcd}
\end{center}
whose upper left morphism $t_*$ identifies with the induced map of the collar twist $\pi_3\SO(2n)\ra\pi_3\BDiffuo(W_g\sharp\Sigma,D^{2n})$, tensored with $\bfZ[\frac{1}{2}]$. By Corollary~\ref{corollary:collartwistingtrivial}, this morphism is trivial in the case of the standard sphere, resulting in \[\textstyle{\oH_3(\BDiffuo(W_g);\bfZ[\frac{1}{2}])\cong\oH_3(\BDiffuo(W_g,D^{2n});\bfZ[\frac{1}{2}])}.\] The latter group is in turn isomorphic to $\oH_3(\BDiffuo(W_g\sharp\Sigma,D^{2n});\bfZ[\frac{1}{2}])$ by Lemma~\ref{lemma:relativediffisomorphic}, so the lower row of the diagram provides an exact sequence as claimed. To prove the second part of the theorem, we use the stabilised Pontryagin--Thom map $\BDiffuo(W_g\sharp\Sigma,D^{2n})\ra\Omega^{\infty}_\bullet\Sigma^{-2n}\MO\langle n\rangle$ with a path component of $\Omega^{\infty}\Sigma^{-2n}\MO\langle n\rangle$ as its target (see Remark~\ref{remark:mapofspaces}) in order to extend the left square in the diagram above to a commutative diagram
\begin{center}
\begin{tikzcd}[row sep=0.5cm, column sep=0.434cm]
\bfZ[\frac{1}{2}]\arrow[r,"t_*"]\arrow[d,"\operatorname{id}",swap]&\pi_3\BDiffuo(W_g\sharp\Sigma,D^{2n})\otimes\bfZ[\frac{1}{2}]\arrow[r]\arrow[d]&\pi_3\Omega^{\infty}_\bullet\Sigma^{-2n}\MO\langle n\rangle\otimes\bfZ[\frac{1}{2}]\arrow[d]\\
\bfZ[\frac{1}{2}]\arrow[r]&\oH_3(\BDiffuo(W_g\sharp\Sigma,D^{2n});\bfZ[\frac{1}{2}])\arrow[r]&\oH_3(\Omega^{\infty}_\bullet\Sigma^{-2n}\MO\langle n\rangle;\bfZ[\frac{1}{2}]).
\end{tikzcd}
\end{center}
By the discussion leading to Proposition~\ref{proposition:nontrivialitycondition}, the upper composition is nontrivial for all homotopy spheres $\Sigma\in\Theta_{2n}$ for which the subgroup $\operatorname{im}(J)_3\cdot[\Sigma,\ell_\Sigma]\subset\pi_{2n+3}\MO\langle n\rangle$ is nontrivial away from the prime $2$. In the case $2n=10$, the group $\Theta_{10}$ is isomorphic to $\bfZ/6$, so by Proposition~\ref{proposition:nontrivialproducts}, the upper composition is nontrivial for all $10$-dimensional homotopy spheres $\Sigma$ whose order is divisible by $3$. Consequently, in order to finish the proof, it suffices to show that the Hurewicz homomorphism \[\textstyle{\pi_3\Omega^{\infty}_\bullet\Sigma^{-10}\MString\otimes\bfZ[\frac{1}{2}]\lra\oH_3(\Omega^{\infty}_\bullet\Sigma^{-10}\MString;\bfZ)\otimes\bfZ[\frac{1}{2}]}\] of the desuspended Thom spectrum $\Sigma^{-10}\MString=\Sigma^{-10}\MO\langle5\rangle$ is injective, which follows from the subsequent lemma concluding the proof.
\end{proof}

\begin{lem}The Hurewicz homomorphism
\[\pi_3\Omega^{\infty}_\bullet\Sigma^{-10}\MString\lra\oH_3(\Omega^{\infty}_\bullet\Sigma^{-10}\MString;\bfZ)\] is an isomorphism.
\end{lem}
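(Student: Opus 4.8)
The plan is to reduce the statement to a two‑stage Postnikov computation. Since $\Sigma^{-10}\MString$ has $\pi_i\cong\pi_{i+10}\MString$, its connective cover $E:=\tau_{\ge0}\Sigma^{-10}\MString$ satisfies $\pi_iE\cong\pi_{i+10}\MString$ for $i\ge0$, and because any two path components of an infinite loop space are homotopy equivalent and such a component is insensitive to the bottom homotopy group of the underlying spectrum, one has $\Omega^{\infty}_\bullet\Sigma^{-10}\MString\simeq\Omega^{\infty}_0E\simeq\Omega^{\infty}(\tau_{\ge1}E)$. The next step is to feed in the low‑degree string bordism groups: $\pi_{11}\MString=0$, $\pi_{12}\MString\cong\bfZ$ and $\pi_{13}\MString\cong\bfZ/3$. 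These can be read off either from Giambalvo's computation of the string bordism groups, or from the homotopy of $\tmf$ (see e.g.\ \cite[Ch.\,13]{tmf}) together with the fact that the string orientation $\MString\to\tmf$ is an isomorphism on homotopy groups in degrees $\le13$. In particular $\tau_{\ge1}E$ is $1$-connected, so $Y:=\Omega^{\infty}(\tau_{\ge1}E)$ is simply connected with $\pi_2Y\cong\bfZ$ and $\pi_3Y\cong\bfZ/3$.

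I would then pass to the Postnikov truncation $Z:=\tau_{\le3}\tau_{\ge1}E$, whose only nonzero homotopy groups are $\pi_2Z\cong\bfZ$ and $\pi_3Z\cong\bfZ/3$; the canonical map $Y\to\Omega^{\infty}Z$ is $4$-connected and hence an isomorphism on $\oH_3(-;\bfZ)$ compatible with the Hurewicz homomorphisms. The spectrum $Z$ sits in a fibre sequence $\Sigma^{3}H\bfZ/3\to Z\to\Sigma^{2}H\bfZ$ whose $k$-invariant lies in $\oH^{2}(H\bfZ;\bfZ/3)=[H\bfZ,\Sigma^{2}H\bfZ/3]$, and this group vanishes because $\oH^{*}(H\bfZ;\bfZ/3)=\mathcal{A}_3/\mathcal{A}_3\beta$ is trivial in degrees $1$, $2$ and $3$ (its first nonzero class above degree $0$ is $P^{1}$, in degree $4$). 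Thus $Z\simeq\Sigma^{2}H\bfZ\vee\Sigma^{3}H\bfZ/3$ splits as a wedge of spectra, and applying $\Omega^{\infty}$---which sends a finite wedge of spectra to a product of spaces---yields $\Omega^{\infty}Z\simeq K(\bfZ,2)\times K(\bfZ/3,3)$. A Künneth computation now gives $\oH_3(\Omega^{\infty}Z;\bfZ)\cong\bfZ/3$, carried entirely by the factor $K(\bfZ/3,3)$: the cross‑terms vanish since $\oH_*(K(\bfZ,2);\bfZ)$ is concentrated in even degrees while $\oH_1(K(\bfZ/3,3);\bfZ)=\oH_2(K(\bfZ/3,3);\bfZ)=0$, and the Tor‑terms vanish since $\oH_*(K(\bfZ,2);\bfZ)$ is torsion‑free. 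On $K(\bfZ/3,3)$ the Hurewicz homomorphism is an isomorphism in its bottom nonzero degree $3$, and pulling this back along the $4$-connected map $Y\to\Omega^{\infty}Z$ identifies the Hurewicz homomorphism $\pi_3Y\to\oH_3(Y;\bfZ)$ with it, concluding the proof.

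The hard part is the input computation of $\pi_{11}\MString$, $\pi_{12}\MString$ and $\pi_{13}\MString$; granting it, the remaining two‑stage Postnikov argument is routine. I would also remark that the argument is fairly robust: it uses only that $\pi_{11}\MString=0$ (so that $Y$ is simply connected and no class enters $\oH_3(Y;\bfZ)$ from a $K(\pi_1,1)$-factor) and that $\pi_{13}\MString$ has no $2$-torsion, which already forces the relevant $k$-invariant to vanish; in particular it is insensitive to any $2$-primary torsion that $\pi_{12}\MString$ might a priori carry.
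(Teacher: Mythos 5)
Your proof is correct and follows essentially the same route as the paper: pass to the connective cover, read off $\pi_{11},\pi_{12},\pi_{13}\MString$, observe the only relevant $k$-invariant lies in $\oH^{2}(\mathbf{H}\bfZ;\bfZ/3)=0$, split the two-stage Postnikov truncation as $\Sigma^{2}\mathbf{H}\bfZ\vee\Sigma^{3}\mathbf{H}\bfZ/3$, and conclude via Hurewicz. The only deviation is in the verification that $\oH^{2}(\mathbf{H}\bfZ;\bfZ/3)=0$: you invoke the standard identification $\oH^{*}(\mathbf{H}\bfZ;\bfZ/3)\cong\mathcal{A}_3/\mathcal{A}_3\beta$, whereas the paper deduces it from $\oH^{5}(K(\bfZ,3);\bfZ/3)=0$ via the Serre spectral sequence of $K(\bfZ,2)\to *\to K(\bfZ,3)$ -- both are routine.
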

\begin{proof}
The component $\Omega^{\infty}_\bullet\Sigma^{-10}\MString$ is the infinite loop space of the $0$-connected cover of the spectrum $\Sigma^{-10}\MString$. By \cite{Giambalvo}, its first homotopy groups are given by the table
\begin{center}
\bgroup
\def\arraystretch{1.3}
\begin{tabular}{ c | c | c | c  }
   & $i=11$ & $i=12$ & $i=13$ \\ \hline
  $\pi_i\MString$ & $0$ & $\bfZ$ & $\bfZ/3$\\
\end{tabular}
\egroup,
\end{center}
from which we see that the first possibly nontrivial $k$-invariant of the $0$-connected cover of $\Sigma^{-10}\MString$ lies in $\oH^{2}(\mathbf{HZ};\bfZ/3)$. This group is isomorphic to the unstable homology group $\oH^{5}(K(\bfZ,3);\bfZ/3)$, since more generally $\oH^k(\mathbf{H}A;B)\cong \oH^{k+n}(K(A,n);B)$ for $n>k$ and any abelian groups $A$ and $B$, which is a result of the evaluation map $\Sigma\Omega K(A,n)\ra K(A,n)$ being $(2n-1)$-connected as it is the homotopy pullback of the $(2n-1)$-connected inclusion $ K(A,n)\vee  K(A,n)\rightarrow K(A,n)\times K(A,n)$ along the diagonal. But $\oH^{5}(K(\bfZ,3);\bfZ/3)$ can be seen to vanish by employing the Serre spectral sequence of the homotopy fibre sequence $K(\bfZ,2)\ra*\ra K(\bfZ,3)$, so the $3$-truncation of the $0$-connected cover of $\Sigma^{-10}\MString$ splits into $\Sigma^{2}\mathbf{H}\bfZ$ and $\Sigma^{3}\mathbf{H}\bfZ/3$. The result is now implied by combining the induced splitting of the associated infinite loop space with the Hurewicz theorem for spaces. 
\end{proof}
\begin{rem}By Corollary~\ref{corollary:H3}, the groups \[\textstyle{\oH_{3}(\BDiff(W_g);\bfZ[\frac{1}{2}])\quad\text{and}\quad\oH_{3}(\BDiff(W_g\sharp\Sigma);\bfZ[\frac{1}{2}])}\] are not isomorphic for $g\ge0$ and any homotopy sphere $\Sigma\in\Theta_{10}$ whose order is divisible by $3$. In fact, in this dimension, one can explicitly calculate the homology of $\BDiff(W_g\sharp\Sigma)$ in low degrees for large $g$, at least after inverting $2$. One starts by computing the groups $\oH_*(\BDiffuo(W_g,D^{10});\bfZ[\frac{1}{2}])$ in low degrees for large $g$ by combining computations of the homotopy groups of $\MString$ \cite{Giambalvo,HoveyRavenel} with a calculation of the first $k$-invariants of the $0$-connected cover of $\MTString(10)$ and an extension of a method of Galatius--Randal-Williams \cite[Ch.\,5]{GRWabelian}. From this, the groups in question can be obtained using the ideas of the proof of Theorem~\ref{theorem:H3}. The following table describes the $i$th homology with $\bfZ[\frac{1}{2}]$-coefficients of $\BDiff(W_g\sharp\Sigma)$ for $g\ge{2i+3}$. The first row displays the respective homology groups for the two homotopy spheres $\Sigma\in\Theta_{10}\cong\bfZ/6$ of order $0$ and $2$, whereas the second row shows the respective groups for the other four homotopy spheres in $\Theta_{10}$.
\begin{center}
\bgroup
\def\arraystretch{1.3}
\begin{tabular}{ c | c | c | c  }
   & $i=1$ & $i=2$ & $i=3$ \\ \hline
  $\oH_i(\BDiff(W_g);\bfZ[\frac{1}{2}])$ & $0$ & $\bfZ[\frac{1}{2}]$ & $(\bfZ/3)^2$\\\hline
   $\oH_i(\BDiff(W_g\sharp\Sigma);\bfZ[\frac{1}{2}])$ & $0$ & $\bfZ[\frac{1}{2}]$ & $\bfZ/3$\\ 
\end{tabular}
\egroup
\end{center}
\end{rem}

\bibliographystyle{amsalpha}
\bibliography{literature}
\end{document}